\documentclass{amsart}

\usepackage{amsmath,amsfonts,amsthm,latexsym,amssymb,mathrsfs,setspace,enumitem,color,cite,graphicx,epsf,tikz-cd}
\usepackage{fullpage}
\usepackage{lineno}


\newtheorem{thm}{Theorem}[section]
\newtheorem{lem}[thm]{Lemma}
\newtheorem{prop}[thm]{Proposition}
\newtheorem{cor}[thm]{Corollary}
\numberwithin{equation}{section}
\numberwithin{thm}{section}


\theoremstyle{definition}

\newtheorem{defin}[thm]{Definition}

\newcommand{\rat}{\mathbb Q}

\newcommand{\alg}{{\overline\rat}}
\newcommand{\algt}{{\alg^{\times}}}

\newcommand{\nat}{\mathbb N}

\newcommand{\gal}{\mathrm{Gal}}

\newcommand{\tors}{\mathrm{tors}}

\newcommand{\con}{\mathrm{con}}

\newcommand{\Add}{\mathrm{Add}}
\newcommand{\Mult}{\mathrm{Mult}}
\newcommand{\directlimit}{\mathbb A}
\newcommand{\crazyadele}{\mathbb V}
\newcommand{\I}{\mathcal I}

\title{Direct Limits of Ad\`ele Rings and Their Completions}

 \author{James P. Kelly}
     \address{Christopher Newport University, Department of Mathematics, 1 Avenue of the Arts, Newport News, VA 23606, USA}
     \email{james.kelly@cnu.edu}
     
  \author{Charles L. Samuels}
     \address{Christopher Newport University, Department of Mathematics, 1 Avenue of the Arts, Newport News, VA 23606, USA}
     \email{charles.samuels@cnu.edu}

\date{December 19, 2019}

\keywords{Ad\`ele Rings, Completions of Topological Rings, Direct Limits}
\subjclass[2010]{11R56, 
			 13J10, 
			 46A13, 
			 (Primary); 
			 11R32, 
			 18A30, 
			 22D15, 
			 54A20 
			 (Secondary)}

\begin{document}

\begin{abstract}
	The ad\`ele ring $\mathbb A_K$ of a global field $K$ is a locally compact, metrizable topological ring which is complete with respect to any invariant metric on $\mathbb A_K$.  
	For a fixed global field $F$ and a possibly infinite algebraic extension $E/F$, there is a natural partial ordering on $\{\mathbb A_K:F\subseteq K\subseteq E\}$.  Therefore, we may form the direct limit
	\[
		\directlimit_E = \varinjlim \mathbb A_K
	\]
	which provides one possible generalization of ad\`ele rings to arbitrary algebraic extensions $E/F$.  In the case where $E/F$ is Galois, 
	we define an alternate generalization of the ad\`eles, denoted $\overline{\crazyadele}_E$, to be a certain metrizable topological ring of continuous functions on the set of places of $E$.   
	We show that $\overline{\crazyadele}_E$ is isomorphic to the completion of $\directlimit_E$ with respect to any invariant metric
	and use this isomorphism to establish several topological properties of $\directlimit_E$.
\end{abstract}

\maketitle

\section{Introduction}

Suppose that $K$ is a global field and $Y_K$ is the set of all places of $K$.  For each $v\in Y_K$ we write $K_v$ to denote the completion of $K$ with respect to $v$.
If $v$ is a nonarchimedean place of $K$, we define the {\it ring of $v$-adic integers} by $\mathcal O_v = \{\alpha\in K_v: |\alpha|_v \leq 1\}$
and note that this definition does not depend on the choice of absolute value from $v$.   Moreover, $\mathcal O_v$ forms a ring which is open and compact in $K_v$.  A point 
\begin{equation} \label{BasicAdele}
	\mathfrak a = (a_v)_{v\in Y_K}\in \prod_{v\in Y_K}{K_v}
\end{equation}
is called an {\it ad\`ele of $K$} if there exists a finite subset $S\subseteq Y_K$ such that $a_v\in \mathcal O_v$ for all $v\not\in S$.
Since there are only finitely many archimedean places of $K$, this definition is undisturbed by the fact that $\mathcal O_v$ is undefined at such places.
By defining addition and multiplication coordinatewise, the set of all ad\`eles of $K$ forms a ring $\mathbb A_K$ called the {\it ad\`ele ring of $K$}. 
We define a topology on $\mathbb A_K$ by taking as a basis the sets of the form 
\begin{equation*}
	\prod_{v\in Y_K} U_v \subseteq \prod_{v\in Y_K} K_v
\end{equation*}
which satisfy the following properties:
\begin{itemize}
	\item There exists a finite subset $S\subseteq Y_K$ such that $U_v = \mathcal O_v$ for all $v\not\in S$
	\item $U_v$ is open in $K_v$ for all $v\in Y_K$.
\end{itemize}
It is straightforward to verify that these sets do indeed form a basis for a topology on $\mathbb A_K$, and moreover, $\mathbb A_K$ forms a topological ring.

Suppose now that $R$ is a metrizable topological ring.  The Birkhoff-Kakutani Theorem \cite{Birkhoff,Kakutani} implies that the topology of $R$
is induced by an {\it invariant metric} $d$, i.e., a metric $d$ such that $d(x,y) = d(x+z,y+z)$ for all $x,y,z\in R$.  Given a sequence $\{\alpha_n\}_{n=1}^\infty$ in $R$, 
the following conditions are equivalent:
\begin{enumerate}[label={(\roman*)}]
	\item $\{\alpha_n\}_{n=1}^\infty$ is Cauchy with respect to $d$
	\item\label{NbhdCauchy} For every open neighborhood $U$ of $0$ there exists $N\in \nat$ such that $\alpha_m - \alpha_n \in U$ for all $m,n\geq N$.
\end{enumerate}
In view of these observations, the definitions of Cauchy, complete and completion with respect to $d$ are independent of the invariant metric $d$.
Without any ambiguity, we may now write $\overline R$ to denote the completion of $R$ with respect to any invariant metric.
By applying the Birkhoff-Kakutani Theorem and \ref{NbhdCauchy}, one easily verifies the following properties of ad\`ele rings.

\begin{prop}\label{ClassicalAdeles}
	If $K$ is a global field then $\mathbb A_K$ is a metrizable topological ring which is complete with respect to any invariant metric on $\mathbb A_K$.
\end{prop}

Proposition \ref{ClassicalAdeles} can be established directly in a fairly straightforward manner.  However, should the reader seek a proof of Proposition \ref{ClassicalAdeles}, 
we will establish these properties for a more general object later in this article (see Theorem \ref{Main}\ref{TopologicalRingMain}).
It is also worth noting that $\mathbb A_K$ is locally compact, and as a result, it has a Haar measure and is amenable to methods of harmonic analysis.  Although this is a
crucial feature of ad\`ele rings having important applications in number theory, the properties described in Proposition \ref{ClassicalAdeles} are most relevant to our goals.
 
Suppose now that $L$ is a finite extension of $K$.  If $w$ is a place of $L$ and $|\ |$ is an absolute value from $w$, then the restriction of $|\ |$ to $K$ defines a place $v$ of $K$ which is independent of $|\ |$. 
In this case, we say that $w$ {\it divides} $v$ and we write $Y(L/K,v)$ to denote the set of all places of $L$ which divide $v$.  It is well-known that $Y(L/K,v)$ is a non-empty finite set for all $v\in Y_K$.
If $\mathfrak a\in \mathbb A_K$ has the form \eqref{BasicAdele} then there exists a unique point 
\begin{equation*}
	\mathfrak b = (b_w)_{w\in Y_L} \in \mathbb A_L
\end{equation*}
such that $b_w = a_v$ for all $w\in Y(L/K,v)$.
We say that $\mathfrak b$ is the {\it conorm of $\mathfrak a$ from $K$ to $L$} and we write $ \mathfrak b = \con_{L/K}(\mathfrak a)$.  In this way, we interpret the conorm as a map 
$\con_{L/K}:\mathbb A_K\to \mathbb A_L$.  This definition of $\con_{L/K}$ as well as an alternate equivalent definition is provided in \cite[Ch. II, \S 19]{CasselsFrohlich}.  
One easily checks that $\con_{L/K}$ forms a topological ring isomorphism of $\mathbb A_K$ onto $\con_{L/K}(\mathbb A_K)$ with the subspace topology.   Moreover, the diagram
\begin{center}
	\begin{tikzcd}[row sep=huge, column sep = huge]
		\mathbb A_L \arrow[r,"\con_{M/L}"] & \mathbb A_M \\
		\mathbb A_K \arrow[u,"\con_{L/K}"] \arrow[ur,"\con_{M/K}",swap]
	\end{tikzcd}
\end{center}
commutes for all global fields $K\subseteq L\subseteq M$.  In view of these observations, it is common to identify $\mathbb A_K$ with $\con_{L/K}(\mathbb A_K)$.  This identification permits us to write
$\mathbb A_K\subseteq \mathbb A_L$ whenenver $K\subseteq L$ and to refer to the conorm as an inclusion map.

We now fix a global field $F$ for the remainder of this article. The reader may feel free to assume that $F=\rat$ without sacrificing the generality of our results.  
Indeed, had we written the theorems and proofs only for that special case, no substantive modifications would be required to generalize to the situation where $F$ is an arbitrary global field.
Regardless, many of the objects we consider depend on $F$ even though we shall often suppress this dependency in our notation.  

For each (possibly infinite) Galois extension $E/F$ we set 
\begin{equation*}
	\I_E = \{K\subseteq E: K/F\mbox{ finite Galois}\}.
\end{equation*}
Under these assumptions, $\langle \mathbb A_K,\con_{L/K} \rangle$ defines a direct system over $\I_E$ and we may form the direct limit
\begin{equation} \label{DirectLim}
	\directlimit_E := \varinjlim \mathbb A_K
\end{equation}
equipping $\directlimit_E$ with the usual final topology.  For readers unfamiliar with the right hand side of \eqref{DirectLim}, it behaves much like a union of the $\mathbb A_K$.
Indeed, interpreting the conorm as an inclusion map as in our earlier remarks, we may alternatively write 
\begin{equation*}
	\directlimit_E = \bigcup_{K\in \I_E}\mathbb A_K.
\end{equation*}
There is a ring structure on $\directlimit_E$ which causes $\directlimit_E$ to be a topological ring and causes the canonical inclusion maps $\mathbb A_K\to \directlimit_E$ to be topological ring homomorphisms.  
If $E/F$ is a finite Galois extension, then $E$ is a global field and $\directlimit_E$ is equal to the usual ad\`ele ring of $E$ as defined earlier in the introduction.
Therefore, $\directlimit_E$ is one possible generalization of the ad\`eles to arbitrary Galois extensions of global fields.  Rogawski \cite[Ch. 3]{Rogawski} refers to $\directlimit_E$ as the ad\`ele ring of $E$.

If $E/F$ is infinite then $\directlimit_E$ has been only sparingly studied in the literature (Rogawski \cite[Ch. 3]{Rogawski} is the only example of which we are aware), and as such, very little is known 
about its topological and algebraic properties.  For instance, one natural problem is to determine whether the properties described in Proposition \ref{ClassicalAdeles} hold for $\mathbb A_E$.
Although it is straightforward to use the Birkhoff-Kakutani Theorem to prove that $\directlimit_E$ is metrizable, it appears much more difficult to provide meaningful information on its completion with respect to an invariant metric.
The primary goal of this article is to recognize $\overline{\directlimit}_E$ as a space of continuous functions.

We regard our work to be a direct analog of a problem encountered by Allcock and Vaaler \cite{AllcockVaaler} which deals with normed vector spaces.
Letting $\alg$ denote an algebraic closure of $\rat$ and $h$ the Weil height, they studied the vector space $\mathscr V := \algt/\alg^\times_\tors$ over $\rat$ equipped with the norm given by
\begin{equation*}
	\alpha\mapsto 2h(\alpha).
\end{equation*}
Allcock and Vaaler's stated objective was to determine the completion of $\mathscr V$ with respect to this norm.  They accomplished this task by defining the set $Y_E$ of places of $E$,
equipping $Y_E$ with a topology, and creating a measure $\rho$ on the Borel sets $\mathcal B$ of $Y_{E}$.  
Then \cite[Theorem 1]{AllcockVaaler} and its surrounding remarks described an isometric isomorphism from the completion of ${\mathscr V}$ to 
\begin{equation*}
	\mathcal X:= \left\{ F\in L^1(Y_{\alg},\mathcal B,\rho): \int_{Y_{\alg}} F(y)\mathrm{d}\rho(y) = 0\right\}.
\end{equation*}
In other words, we may now recognize $\mathscr V$ as a space of integrable functions.
	 
Returning to our study of $\directlimit_E$, we will address our goal by constructing an alternate generalization of the ad\`eles, denoted $\overline{\crazyadele}_E$.
Roughly speaking, $\overline{\crazyadele}_E$ is a space of continuous functions on the set of places of $E$, and our main results (Theorem \ref{Main} and Corollary \ref{DirectLimitCompletionMain})
show it to be isomorphic to $\overline{\directlimit}_E$.   
These results are a direct analog of \cite[Theorem 1]{AllcockVaaler} with $\directlimit_E$ playing the role of $\mathscr V$, $\overline{\crazyadele}_E$
playing the role of $\mathcal X$, and continuous functions playing the role of integrable functions.   
As an added benefit, we will be able to use $\overline{\crazyadele}_E$ to show that $\directlimit_E$ has empty interior inside of $\overline{\directlimit}_E$. 
We provide the formal definition of $\overline{\crazyadele}_E$ in Section \ref{Construction} and defer all proofs to Sections 3-7.

\section{An Alternate Generalization of Ad\`ele Rings} \label{Construction}

\subsection{Overview and Intuition}
Since the formal definition of $\overline{\crazyadele}_E$ is quite involved, we find it worthwhile to provide the reader with an intuitive understanding before proceeding with its details.  
Borrowing our notation for global fields, we write $Y_E$ for the set of places of $E$, $E_y$ for the completion of $E$ with respect to a place $y\in Y_E$, and 
$\mathcal O_y = \{\alpha\in E_y: |\alpha|_y \leq 1\}$.  We wish for an {\it ad\`ele of $E$} to be a point
\begin{equation*}
	\mathfrak a = (a_y)_{y\in Y_E} \in \prod_{y\in Y_E} E_y
\end{equation*}
satisfying the following conditions:
\begin{enumerate}[label={(a.\arabic*)}]
	\item\label{BadUnitDisk} There exists a compact set $Z\subseteq Y_E$ such that $a_y\in \mathcal O_y$ for all $y\in Y_E\setminus Z$.
	\item\label{BadContinuous} The map $y\mapsto a_y$ is continuous on $Y_E$.
\end{enumerate}
Of course, this definition does not make sense without further information.  For instance, both conditions require a topology on $Y_E$, and as of this moment, we have not provided one.
If we are able to topologize $Y_E$ in such a way that $Y_E$ is discrete whenever $E/F$ is finite, then we would have defined a direct 
generalization of the classical definition of ad\`ele.  After all, if $Y_E$ is discrete then compactness is equivalent to finiteness and every map having $Y_E$ as its domain is continuous.

The work of Allcock and Vaaler \cite{AllcockVaaler} can help us equip $Y_E$ with such a topology, but even then, our proposed definition still fails.  Since $a_y$ belongs to different completions of $E$ as 
$y$ varies through $Y_E$, the rule described in \ref{BadContinuous} does not create a well-defined map.  In order to resolve this problem, we will
need a system of maps among the various completions of $E$, and moreover, our definition of ad\`ele must be independent of that choice of system.  All of this must be done in a way that causes
$\overline{\crazyadele}_E$ to be isomorphic to $\overline{\directlimit}_E$.

\subsection{Absolute Values and Places of $E$}
We are now prepared to move forward with our formal definition of $\overline{\crazyadele}_E$ which, for reasons outlined above, borrows various aspects of \cite{AllcockVaaler}.
If $K\in \I_E$ and $v$ is a place of $K$, then we write $Y(E/K,v)$ to denote the set of all places of $E$
which divide $v$.  Allcock and Vaaler used the notation $Y(K,v)$ instead of $Y(E/K,v)$.  Moreover, they defined $Y(E/K,v)$ to be a certain inverse limit of finite discrete sets and later 
identified it with the places of $E$ that divide $v$. The set $Y_E$ was defined to be the union of the sets $Y(E/F,p)$.  We refer the reader to \cite[\S 2]{AllcockVaaler} for the details.

According to \cite[\S 2]{AllcockVaaler}, the collection
\begin{equation*}
	\left\{ Y(E/K,v): K\in \I_E,\ v\in Y_K\right\}
\end{equation*}
is a basis for a totally disconnected, Hausdorff topology on $Y_E$, and moreover, each basis element $Y(E/K,v)$ is compact and non-empty.  If $E/F$ is finite then $Y_E$ is discrete.

For each place $p$ of $F$, we select an absolute value $|\ |_p$ from $p$, and as such, $|\ |_p$ extends to a unique absolute value on the completion $F_p$.
For every $K\in \I_E$ and $v\in Y(K/F,p)$, there is a unique extension of $|\ |_p$ to $K_v$, denoted $|\ |_v$ (see \cite[Prop. 2.2]{Lang}). 
Still utilizing the observations of \cite[\S 2]{AllcockVaaler}, each place $y\in Y(E/F,p)$ may be used to define a unique absolute value $|\ |_y$ on $E$ satisfying the following property:
\begin{equation*}
	y\in Y(E/K,v)\implies |\alpha|_y = |\alpha|_v\mbox{ for all } \alpha \in K.
\end{equation*}
In other words, not only does $y$ divide $v$, but the specific absolute value $|\ |_y$ agrees with $|\ |_v$ when restricted to $K$.  Moreover, $|\ |_y$ extends to a unique absolute
value on $E_y$ whose restriction to $K_v$ is equal to $|\ |_v$.

We assume now that $\gal(E/F)$ is equipped with the Krull topology as in \cite[Ch. IV, \S 1]{Neukirch}.
If $K\in \I_E$ then according to \cite[\S 3]{AllcockVaaler}, there is a well-defined action of the normal subgroup $\gal(E/K)$ on $Y(E/K,v)$ which satisfies the identity
$|\sigma(\alpha) |_{\sigma(y)} = |\alpha |_y$ for all $\alpha\in E$, $\sigma\in \gal(E/K)$ and $y\in Y(E/K,v)$.  Moreover, this action is transitive and the map
\begin{equation*}
	\gal(E/K) \times Y(E/K,v)\to Y(E/K,v),\quad (\sigma,y)\mapsto \sigma(y)
\end{equation*}
is continuous (see \cite[Ch. II, Prop. 9.1]{Neukirch} and \cite[Lemma 3]{AllcockVaaler}, respectively).
Thus, each element $\sigma\in \gal(E/K)$ extends to a map $E_y\to E_{\sigma(y)}$ which satisfies $ |\sigma(\alpha) |_{\sigma(y)} = |\alpha |_y$ for all $\alpha\in E_y$.  This means that $\sigma$ defines an 
isometric isomorphism from $E_y$ to $E_{\sigma(y)}$.

While the isometric isomorphisms provided by \cite{AllcockVaaler} play an important role, they are not sufficient on their own to construct $\overline{\crazyadele}_E$. 
Our definition of $\overline{\crazyadele}_E$ will require those maps to be selected in a way which satisfies certain algebraic and topological properties.  Fortunately, such maps must always exist.

\begin{thm} \label{TransitionExistence}
	Suppose $E/F$ is a Galois extension and $K\in \I_E$.  If $v\in Y_K$ then there exists a map $\lambda:Y(E/K,v)\times Y(E/K,v)\to \gal(E/K)$ satisfying the following conditions:
	\begin{enumerate}[label={(D.\arabic*)}] 
		\item\label{IdentityMain} $\lambda(x,x)$ is the identity element of $\gal(E/K)$ for all $x\in Y(E/K,v)$
		\item\label{TransferMain} $\lambda(x,y)(x) = y$ for all $x,y\in Y(E/K,v)$
		\item\label{TransitiveMain} $\lambda(y,z)\lambda(x,y) = \lambda(x,z)$ for all $x,y,z\in Y(E/K,v)$
		\item\label{ContinuousMain} $\lambda$ is continuous.
	\end{enumerate}
\end{thm}

A map $\lambda:Y(E/K,v)\times Y(E/K,v)\to \gal(E/K)$ which satisfies the conditions of Theorem \ref{TransitionExistence} is called a {\it $v$-adic transition diagram on $E$} and we write $T(E/K,v)$ to denote
the set of all such maps.   Because of properties \ref{IdentityMain}, \ref{TransferMain} and \ref{TransitiveMain}, each element of $T(E/K,v)$ creates a commutative diagram 
with objects $E_y$ and morphisms $\lambda(x,y)$. 
As such, we view a $v$-adic transition diagram as a commutative diagram which possesses an additional topological property \ref{ContinuousMain}.



\subsection{Definition of Generalized Ad\`ele} Our formal definition of $\overline{\crazyadele}_E$ requires one additional theorem before proceeding.

\begin{thm} \label{MainTransfer}
	Suppose that $E/F$ is a Galois extension and $K,L\in \I_E$.
	For each place $v$ of $K$, assume that $r_v\in Y(E/K,v)$ and $\lambda_v\in T(E/K,v)$.
	For each place $w$ of $L$, assume that $s_w\in Y(E/L,w)$ and $\mu_w\in T(E/L,w)$.
	If $(a_y)_{y\in Y_E} \in \prod_{y\in Y_E} E_y$ then the following conditions are equivalent:
	\begin{enumerate}[label={(\roman*)}]
		\item $y\mapsto \lambda_v(y,r_v)(a_y)$ is a continuous map $Y(E/K,v)\to E_{r_v}$ for all $v\in Y_K$
		\item $y\mapsto \mu_w(y,s_w)(a_y)$ is a continuous map $Y(E/L,w)\to E_{s_w}$ for all $w\in Y_L$
	\end{enumerate}
\end{thm}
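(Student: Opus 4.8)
\emph{Proof plan.} The strategy is to reduce, by passing to the compositum, to the case in which one of the two fields contains the other, and then to detect continuity separately on the pieces of the clopen decomposition of $Y(E/K,v)$ obtained by refining $K$ to $L$. The technical core is a continuity principle that I would establish first. Fix $K\in\I_E$, $v\in Y_K$, and $x,r\in Y(E/K,v)$; as recalled before Theorem \ref{TransitionExistence}, any $\sigma\in\gal(E/K)$ with $\sigma(x)=r$ extends to an isometric isomorphism $E_x\to E_r$. \textbf{Claim:} if $Z$ is a topological space, $h\colon Z\to\gal(E/K)$ is continuous with $h(z)(x)=r$ for all $z$, and $g\colon Z\to E_x$ is continuous, then $z\mapsto h(z)(g(z))$ is continuous $Z\to E_r$. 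To see this near a point $z_0$, one chooses $c\in E$ with $|g(z_0)-c|_x<\varepsilon/4$, using that $E$ is dense in $E_x$; for $z$ near $z_0$ continuity of $g$ gives $|g(z)-g(z_0)|_x<\varepsilon/4$, while $h(z)(c)=h(z_0)(c)$ because $\sigma\mapsto\sigma(c)$ is locally constant on $\gal(E/K)$ (the stabilizer $\gal(E/K(c))$ is open in the Krull topology). Since $h(z)$ and $h(z_0)$ are isometries onto $E_r$, the triangle inequality then bounds $|h(z)(g(z))-h(z_0)(g(z_0))|_r$ by $\varepsilon$.

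For the reduction I would set $M=KL\subseteq E$; since $E/F$ is Galois and $K/F$, $L/F$ are finite Galois, $M/F$ is finite Galois, so $M\in\I_E$, and by Theorem \ref{TransitionExistence} I may fix a $u$-adic transition diagram and a base point in $Y(E/M,u)$ for every place $u$ of $M$. It then suffices to prove the asserted equivalence whenever the two fields are nested (with arbitrary chosen transition diagrams and base points): applying this once to $K\subseteq M$ and once to $L\subseteq M$ and chaining the two equivalences through the corresponding condition for $M$ yields the theorem.

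So assume $K\subseteq L$, with $(\lambda_v,r_v)_{v\in Y_K}$ and $(\mu_w,s_w)_{w\in Y_L}$ fixed, and fix $v\in Y_K$. Then $Y(E/K,v)$ is the disjoint union of the sets $Y(E/L,w)$ over the finitely many places $w$ of $L$ dividing $v$, and each such $Y(E/L,w)$ is clopen in $Y_E$ (it is a basic open set, and it is compact inside the Hausdorff space $Y_E$); hence a map on $Y(E/K,v)$ is continuous precisely when each of its restrictions to these pieces is. For $w\mid v$ and $y\in Y(E/L,w)$ I put $\tau_y=\lambda_v(y,r_v)\circ\mu_w(s_w,y)\in\gal(E/K)$. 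A short computation using \ref{IdentityMain}--\ref{TransitiveMain} gives $\mu_w(s_w,y)=\mu_w(y,s_w)^{-1}$ and $\tau_y(s_w)=r_v$, and since the extension of a Galois automorphism to the completions is the unique isometric one (so these extensions compose), it follows that $\tau_y(\mu_w(y,s_w)(a_y))=\lambda_v(y,r_v)(a_y)$ in $E_{r_v}$. Moreover $y\mapsto\tau_y$ is continuous into $\gal(E/K)$: $y\mapsto\lambda_v(y,r_v)$ and $y\mapsto\mu_w(s_w,y)$ are continuous by \ref{ContinuousMain} (the latter composed with the continuous inclusion $\gal(E/L)\hookrightarrow\gal(E/K)$), and multiplication in $\gal(E/K)$ is continuous.

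Both implications now follow from the Claim applied with $Z=Y(E/L,w)$. If (ii) holds, then for each $w\mid v$ the map $g\colon y\mapsto\mu_w(y,s_w)(a_y)$ is continuous $Y(E/L,w)\to E_{s_w}$, so taking $h(y)=\tau_y$ (which sends $s_w$ to $r_v$) we get $y\mapsto\tau_y(g(y))=\lambda_v(y,r_v)(a_y)$ continuous on each piece $Y(E/L,w)$, hence on $Y(E/K,v)$; since $v$ is arbitrary, (i) holds. If (i) holds, then $y\mapsto\lambda_v(y,r_v)(a_y)$ is continuous on the clopen set $Y(E/L,w)$, and taking $h(y)=\tau_y^{-1}$ (continuous, sending $r_v$ to $s_w$) the Claim gives $y\mapsto\tau_y^{-1}(\lambda_v(y,r_v)(a_y))=\mu_w(y,s_w)(a_y)$ continuous $Y(E/L,w)\to E_{s_w}$ for every place $w$ of $L$, which is (ii). I expect the main obstacle to be the Claim itself — specifically the local constancy of $\sigma\mapsto\sigma(c)$ and the fact that the target of the isometry induced by $\sigma$ varies with $\sigma(x)$, which is exactly why the hypothesis $h(z)(x)=r$ must be imposed; the remainder is bookkeeping with \ref{IdentityMain}--\ref{ContinuousMain} together with the elementary fact that a finite clopen partition detects continuity.
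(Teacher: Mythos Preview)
Your proposal is correct and follows essentially the same approach as the paper. Your Claim is precisely the content of the paper's Lemma \ref{SContinuity} (continuity of $(\sigma,\alpha)\mapsto\sigma(\alpha)$ on $\{\sigma:\sigma(r)=s\}\times E_r$), proved by the same density-plus-open-stabilizer argument, and your factorization $\lambda_v(y,r_v)(a_y)=\tau_y\bigl(\mu_w(y,s_w)(a_y)\bigr)$ is exactly the paper's decomposition $g_w=\psi\circ(\phi\times f)\circ d$. The only organizational difference is that you first reduce to the nested case $K\subseteq L$ via the compositum $M=KL$, whereas the paper skips this reduction and works directly with arbitrary $K,L$ by localizing at the open set $T=Y(E/K,v)\cap Y(E/L,w)$ around each point; this makes the paper's argument marginally shorter but yours is equally valid.
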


Equipped with Theorem \ref{MainTransfer}, we are able to provide the definition of $\overline{\crazyadele}_E$.

\begin{defin} \label{GenAdeleDef}
	Assume that $E/F$ is a Galois extension and $K\in \I_E$.  For each place $v$ of $K$, let $\lambda_v\in T(E/K,v)$ and $r_v\in Y(E/K,v)$.  
	A point $\mathfrak a = (a_y)_{y\in Y_E} \in \prod_{y\in Y_E} E_y$ is called an {\it ad\`ele of $E$} if the following properties hold:
	\begin{enumerate}[label={(A.\arabic*)}] 
		\item\label{Compact} There exists a compact subset $Z\subseteq Y_E$ such that $a_y \in \mathcal O_y$ for all $y\in Y_E\setminus Z$.
		\item\label{Continuous} The map $y\mapsto \lambda_v(y,r_v)(a_y)$ is a continuous map $Y(E/K,v)\to E_{r_v}$ for all $v\in Y_K$.
	\end{enumerate}
	We shall write $\overline{\crazyadele}_E$ to denote the set of all ad\`eles of $E$.
\end{defin}

Conditions \ref{Compact} and \ref{Continuous} create a rigorous definition inspired by the principles espoused in \ref{BadUnitDisk} and \ref{BadContinuous}.
Furthermore, Theorem \ref{MainTransfer} ensures that Definition \ref{GenAdeleDef} depends only on $E$ and $\mathfrak a$.  Specifically, it is independent of the global field $K$, the $v$-adic transition 
diagrams $\lambda_v$, and the places $r_v$.  Additionally, we are justified in using the term ad\`ele in Definition \ref{GenAdeleDef}.  
Indeed, if $E/F$ is a finite Galois extension then $Y_E$ is discrete, so compactness is equivalent to finiteness and \ref{Continuous} is satisfied for all 
$\mathfrak a$.  In this scenario, we recover the exact definition of ad\`ele for a global field.  

We now wish to equip $\overline{\crazyadele}_E$ with a topology, however, we will need a preliminary theorem.  Still assuming that $E/F$ is a Galois extension and $K\in \I_E$ we let
$v\in Y_K$, $\lambda\in T(E/K,v)$ and $r\in Y(E/K,v)$.  For each set
\begin{equation*}
	U = \prod_{y\in Y_E} U_y \subseteq \prod_{y\in Y_E} E_y
\end{equation*}
we define
\begin{equation*}
	J(K,\lambda,r;U) = \bigcup_{y\in Y(E/K,v)} \left(\{y\}\times \lambda(y,r)(U_y)\right)
\end{equation*}
and note the following important result about $J$.

\begin{thm} \label{TopologyDefine}
	Suppose that $E/F$ is a Galois extension and $K,L\in \I_E$. For each place $v$ of $K$, let $\lambda_v\in T(E/K,v)$ and $r_v\in Y(E/K,v)$.
	For each place $w$ of $L$, let $\mu_w\in T(E/L,w)$ and $s_w\in Y(E/L,w)$.  If
	\begin{equation*}
		U = \prod_{y\in Y_E} U_y \subseteq \prod_{y\in Y_E} E_y
	\end{equation*}
	then the following conditions are equivalent:
	\begin{enumerate}[label={(\roman*)}]
		\item $J(K,\lambda_v,r_v;U)$ is open in $Y(E/K,v)\times E_{r_v}$ for all $v\in Y_K$.
		\item\label{JL} $J(L,\mu_w,s_w;U)$ is open in $Y(E/L,w)\times E_{s_w}$ for all $w\in Y_L$.
	\end{enumerate}
\end{thm}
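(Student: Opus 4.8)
The plan is to reduce to the case $K\subseteq L$ by passing to a compositum, and then to compare the two families of sets $J$ place by place by means of an explicit homeomorphism assembled from the transition diagrams.

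First, for arbitrary $K,L\in\I_E$, put $M=KL$; this lies in $\I_E$, since the compositum of two finite Galois extensions of $F$ inside $E$ is again finite and Galois over $F$. Using Theorem \ref{TransitionExistence}, choose $\nu_u\in T(E/M,u)$ and a base point $t_u\in Y(E/M,u)$ for every $u\in Y_M$. Granting the theorem in the special case in which one field is contained in the other, I would apply it to the pair $K\subseteq M$ and to the pair $L\subseteq M$ and then chain the two equivalences through $(M,\nu_u,t_u)$. Taking $L=K$ in this argument also shows that condition (i) is independent of the chosen transition diagrams $\lambda_v$ and base points $r_v$ (and similarly for (ii)). So assume henceforth that $K\subseteq L$.

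Now fix $v\in Y_K$. The places $w\in Y(L/K,v)$ are finite in number, and each $Y(E/L,w)$ is a basis element of $Y_E$, hence open in $Y(E/K,v)$; since these sets partition $Y(E/K,v)$, the topology on $Y(E/K,v)$ is the disjoint sum of those on the $Y(E/L,w)$, and consequently $J(K,\lambda_v,r_v;U)$ is open in $Y(E/K,v)\times E_{r_v}$ precisely when, for each $w\in Y(L/K,v)$, its intersection with $Y(E/L,w)\times E_{r_v}$ is open. As every place of $L$ divides a unique place of $K$, condition (i) is therefore equivalent to the assertion that $J(K,\lambda_v,r_v;U)\cap\bigl(Y(E/L,w)\times E_{r_v}\bigr)$ is open for every $w\in Y_L$, where $v$ denotes the place of $K$ below $w$. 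Fix such a $w$. A direct check using \ref{IdentityMain}, \ref{TransferMain} and \ref{TransitiveMain} shows that, for every $y\in Y(E/L,w)$, the composite $\lambda_v(y,r_v)\mu_w(s_w,y)$ is an isometric isomorphism $E_{s_w}\to E_{r_v}$ with inverse $\mu_w(y,s_w)\lambda_v(r_v,y)$. I would then set
\begin{equation*}
	\Psi_w\colon Y(E/L,w)\times E_{s_w}\longrightarrow Y(E/L,w)\times E_{r_v},\qquad \Psi_w(y,\alpha)=\bigl(y,\,\lambda_v(y,r_v)\mu_w(s_w,y)(\alpha)\bigr),
\end{equation*}
and verify, again from \ref{TransferMain} and \ref{TransitiveMain}, that $\Psi_w$ maps $J(L,\mu_w,s_w;U)$ bijectively onto $J(K,\lambda_v,r_v;U)\cap\bigl(Y(E/L,w)\times E_{r_v}\bigr)$: a typical point $(y,\mu_w(y,s_w)(u_y))$ of the former, with $u_y\in U_y$, is carried to $(y,\lambda_v(y,r_v)(u_y))$. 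Granting that $\Psi_w$ is a homeomorphism, the openness of $J(L,\mu_w,s_w;U)$ is then equivalent to that of $J(K,\lambda_v,r_v;U)\cap\bigl(Y(E/L,w)\times E_{r_v}\bigr)$, and combining this with the earlier reduction to individual places $w$ completes the proof.

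The hard part will be establishing that $\Psi_w$ and $\Psi_w^{-1}$ are continuous. By \ref{ContinuousMain}, the maps $y\mapsto\lambda_v(y,r_v)$ and $y\mapsto\mu_w(s_w,y)$ are continuous into $\gal(E/K)$, and since multiplication in $\gal(E/K)$ is continuous, $y\mapsto\tau_y:=\lambda_v(y,r_v)\mu_w(s_w,y)$ is a continuous map $Y(E/L,w)\to\gal(E/K)$ with $\tau_y(s_w)=r_v$ for all $y$. Thus continuity of $\Psi_w$ reduces to the joint continuity of the twisted evaluation $(y,\alpha)\mapsto\tau_y(\alpha)$ as a map $Y(E/L,w)\times E_{s_w}\to E_{r_v}$, and likewise for $\Psi_w^{-1}$. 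This is the same phenomenon — a continuously varying family of Galois automorphisms applied to a varying point of a completion — that underlies Theorem \ref{MainTransfer}, so I would isolate it as a lemma, proving it by reducing modulo each finite subextension of $E/K$ (where the Galois action is locally constant and the relevant maps are honest isomorphisms between finite extensions of $K_v$) and then passing to the inverse limit.
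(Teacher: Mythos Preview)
Your approach is essentially the paper's: the map $\Psi_w$ is exactly the map $f$ the paper constructs, and the joint-continuity lemma you isolate at the end is precisely Lemma~\ref{SContinuity}, whose proof uses the density of $E$ in $E_{s_w}$ rather than an inverse-limit argument. The paper streamlines in two small ways: it works directly over $M=KL$ without first reducing to the case $K\subseteq L$, and it notes that only continuity of $\Psi_w$ (not of its inverse) is needed, since one writes $A=\Psi_w^{-1}(B)$ and then obtains the reverse implication from the symmetry of $K$ and $L$ in the statement.
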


We are now prepared to equip $\overline{\crazyadele}_E$ with a topology.  Assume that $E/F$ is a Galois extension and $K\in \I_E$, and
for each place $v\in Y_K$, let $\lambda_v\in T(E/K,v)$ and $r_v\in Y(E/K,v)$.  We take as a basis sets of the form $U\cap \overline{\crazyadele}_E$ where
\begin{equation} \label{UForm}
	U = \prod_{y\in Y_E} U_y \subseteq \prod_{y\in Y_E} E_y
\end{equation}
satisfies the following two properties:
\begin{enumerate}[label={(T.\arabic*)}] 
	\item\label{Integers} There exists a compact subset $Z\subseteq Y_E$ such that $U_y = \mathcal O_y$ for all $y\in Y_E\setminus Z$.
	\item\label{Open} $J(K,\lambda_v,r_v;U)$ is open in $Y(E/K,v)\times E_{r_v}$ for all $v\in Y_K$.
\end{enumerate}
As a consequence of Theorem \ref{TopologyDefine}, this definition does not depend on $K$, $\lambda_v$ or $r_v$.  It is straightforward to check that these sets really do form a basis for a topology
on $\overline{\crazyadele}_E$.  When $U$ is of the form \eqref{UForm} satisfying \ref{Integers} and \ref{Open}, we shall often write that $U$ is open in $\overline{\crazyadele}_E$ rather than 
$\overline{\crazyadele}_E\cap U$ is open in $\overline{\crazyadele}_E$.  In the case where $E/F$ is finite, our topology coincides with the usual topology on the classical ad\`ele ring $\overline{\crazyadele}_E$.  
As a result, $\overline{\crazyadele}_E$ is indeed a generalization of the classical ad\`eles to infinite Galois extensions of global fields.

\subsection{Connection to the Direct Limit} We now state our main result which establishes our desired correspondence between $\overline{\directlimit}_E$ and $\overline{\crazyadele}_E$.

\begin{thm}\label{Main}
	If $E/F$ is a Galois extension then the following conditions hold:
	\begin{enumerate}[label={(\roman*)}]
		\item\label{TopologicalRingMain} $\overline{\crazyadele}_E$ is a metrizable topological ring which is complete with respect to any invariant metric on $\overline{\crazyadele}_E$.
		\item\label{CommutativeDiagramMain} For every $K\in \I_E$ there exists a topological subring $\crazyadele_K\subseteq \overline{\crazyadele}_E$ and a topological ring 
			isomorphism $\phi_K:\mathbb A_K\to \crazyadele_K$ such that the diagram
			\begin{center}
			\begin{tikzcd}[row sep=huge, column sep = huge]
				\mathbb A_L \arrow[r,bend left=10,"\phi_L"]& \crazyadele_L \arrow[l,bend left=10,"\phi^{-1}_L"] \\
				 \mathbb A_K \arrow[u,"\con_{L/K}"] \arrow[r,bend left=10,"\phi_K"] &  \crazyadele_K \arrow[l,bend left=10,"\phi^{-1}_K"] \arrow[u,swap,"inclusion"]
			\end{tikzcd}
			\end{center}
			commutes for all $K,L\in \I_E$ with $K\subseteq L$.
		\item\label{ClosureMain} If $\crazyadele_E = \cup_{K\in \I_E} \crazyadele_K$ then $\overline{\crazyadele}_E$ is equal to the closure of $\crazyadele_E$ in $\overline{\crazyadele}_E$.
	\end{enumerate}
\end{thm}

The properties described in Theorem \ref{Main} combine to yield an isomorphism between $\overline{\crazyadele}_E$ and $\overline{\directlimit}_E$.  This isomorphism
accomplishes our goal of recognizing $\overline{\directlimit}_E$ as a space of continuous functions.

\begin{cor} \label{DirectLimitCompletionMain}
	If $E/F$ is a Galois extension then there exists a topological ring isomorphism $\phi: \overline{\directlimit}_E\to\overline{\crazyadele}_E$ such that $\phi(\directlimit_E) = \crazyadele_E$.
\end{cor}

We have already remarked that the metrizability of $\directlimit_E$ can be established directly from the Birkhoff-Kakutani Theorem.  However, we may also conclude this fact as a corollary 
to Theorem \ref{Main}.

\begin{cor} \label{DirectLimitMetrizable}
	If $E/F$ is a Galois extension then $\directlimit_E$ is a metrizable topological ring.
\end{cor}

If $E/F$ is finite then $\directlimit_E$ is equal to the classical ad\`ele ring of $E$.  In this situation, Proposition \ref{ClassicalAdeles} ensures that $\directlimit_E$ is complete with respect to any invariant metric.
By applying Theorem \ref{Main}, we are able to show that $\directlimit_E$ is far from complete in all other cases.

\begin{thm} \label{EqualityEquivalence}
	If $E/F$ is an infinite Galois extension then $\directlimit_E$ has empty interior in $\overline{\directlimit}_E$.
\end{thm}

Because of its relationship with our main goal, we consider $\overline{\crazyadele}_E$ to be a compelling generalization of the ad\`eles to infinite Galois extensions of global fields.  
However, we must caution the reader that it falls short of retaining several important properties of classical ad\`ele rings.  For instance, when $K$ is a global field, 
$\mathbb A_K$ is constructed to be locally compact, a property which establishes the existence of a Haar measure on $\mathbb A_K$.  
Unfortunately, the same property does not necessarily hold for $\overline{\crazyadele}_E$.  Indeed, if there exists $p\in Y_F$ and $y\in Y(E/F,p)$ such that $E_y/F_p$ is an infinite extension, then $E_y$ cannot be locally 
compact, so we have no hope for $\overline{\crazyadele}_E$ to be locally compact.  

It remains open to determine the local compactness of $\overline{\crazyadele}_E$ whenever $E/F$ is an infinite extension such that $E_y/F_p$ is finite for every $p\in Y_F$ and every $y\in Y(E/F,p)$.
Such extensions do indeed exist with certain special cases studied by Checcoli \cite{Checcoli} as well as Checcoli and Zannier \cite{CheccoliZannier}.
Given the significance of local compactness in applications of classical ad\`ele rings, if one wishes to use $\overline{\crazyadele}_E$
in similar ways, surely a resolution to this problem is an important first step.

The subsequent sections of this paper are structured in the following way.  We use Section \ref{ConstructionProofs} to prove Theorems \ref{TransitionExistence}, 
\ref{MainTransfer} and \ref{TopologyDefine}.  Recall that these theorems are the preliminary results which permit our generalized definition of ad\`ele. 
In Sections \ref{TopologicalRingSection}, \ref{CommutativeDiagramSection} and \ref{ClosureSection} we provide our rather extensive proof of Theorem \ref{Main}.  Finally, Section \ref{EqualityProof}
is used to prove Theorem \ref{EqualityEquivalence}.

\section{Proofs Related to the Construction of $\overline{\crazyadele}_E$} \label{ConstructionProofs}

We provide the proof of Theorem \ref{TransitionExistence} by using a preliminary technical lemma.  For this section, we shall write $\mathbb N_0 = \mathbb N\cup\{0\}$.

\begin{lem} \label{BetterNumberFieldStack}
	Suppose $E/F$ is a Galois extension and $K\in \I_E$.   For each $i\in \nat_0$, assume that $K_i\in \I_E$ are such that $K_0 = K$ and $K_i\subseteq K_{i+1}$.
	There exists a map $\lambda:Y(E/K,v)\times Y(E/K,v)\to \gal(E/K)$ satisfying the following properties.
	\begin{enumerate}[label={(\roman*)}]
		\item\label{Identity2} $\lambda(x,x)$ is the identity of $\gal(E/K)$ for all $x\in Y(E/K,v)$
		\item\label{Action2} $\lambda(x,y)(x) = y$ for all $x,y\in Y(E/K,v)$
		\item\label{Transitive2} $\lambda(y,z) \lambda(x,y) = \lambda(x,z)$ for all $x,y,z\in Y(E/K,v)$
		\item\label{Containment2} If $i\in \nat_0$ and $x$ and $y$ are places of $E$ dividing the same place of $K_i$ then $\lambda(x,y) \in \gal(E/K_i)$.
	\end{enumerate}
\end{lem}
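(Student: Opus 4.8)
The plan is to obtain $\lambda$ from a carefully chosen section of an orbit map. Fix $v\in Y_K$, fix a base point $r\in Y(E/K,v)$, and for each $i$ let $r_i\in Y(K_i/K,v)$ be the place of $K_i$ below $r$; abbreviate $W_i=Y(K_i/K,v)$, a finite set on which $\gal(K_i/K)$ acts transitively. The heart of the argument is a recursive construction of maps $s_i:W_i\to\gal(K_i/K)$ with the three properties: (a) $s_i(w)(r_i)=w$ for all $w\in W_i$, so $s_i$ is a section of the evaluation map $\sigma\mapsto\sigma(r_i)$; (b) $s_i(r_i)$ is the identity; and (c) $s_{i+1}(w)|_{K_i}=s_i(w|_{K_i})$ for all $w\in W_{i+1}$. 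Starting from the trivial $s_0$ (here $W_0=\{v\}$ and $\gal(K_0/K)$ is trivial), the inductive step runs as follows. Since each $K_j/F$ is Galois, so are $K_i/K$ and $K_{i+1}/K$, hence restriction $\gal(K_{i+1}/K)\to\gal(K_i/K)$ is surjective with kernel $\gal(K_{i+1}/K_i)$; for $w\in W_{i+1}$, pick any lift $\widetilde g\in\gal(K_{i+1}/K)$ of $s_i(w|_{K_i})$, observe that $\widetilde g(r_{i+1})$ restricts on $K_i$ to $s_i(w|_{K_i})(r_i)=w|_{K_i}$ by (a), hence lies over $w|_{K_i}$, and compose $\widetilde g$ with an element of $\gal(K_{i+1}/K_i)$ carrying $\widetilde g(r_{i+1})$ to $w$, which exists because $\gal(K_{i+1}/K_i)$ is transitive on the places of $K_{i+1}$ above $w|_{K_i}$; the resulting $s_{i+1}$ satisfies (a) and (c) by construction, and it can be normalized to satisfy (b) by taking the trivial lift when $w=r_{i+1}$. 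This step, where the section property, the base-point normalization, and the tower compatibility must all be preserved at once, is the main obstacle.

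With the $s_i$ in hand I would pass to $E$ itself. Put $E_\infty=\bigcup_i K_i$, a (possibly infinite) Galois subextension of $E/F$ with $\gal(E_\infty/K)=\varprojlim_i\gal(K_i/K)$ and $\gal(E/E_\infty)=\bigcap_i\gal(E/K_i)$. Given $y\in Y(E/K,v)$, property (c) makes $(s_i(y|_{K_i}))_i$ a compatible system, so the sets $C_i=\{\sigma\in\gal(E/K):\sigma|_{K_i}=s_i(y|_{K_i})\}$ form a decreasing chain of nonempty closed subsets of the compact group $\gal(E/K)$ and therefore have nonempty intersection; choose $\sigma_0$ in it. By (a), $\sigma_0(r)$ and $y$ agree on every $K_i$ and hence on $E_\infty$, so by transitivity of $\gal(E/E_\infty)$ on the places of $E$ above $y|_{E_\infty}$ — the same transitivity fact for Galois actions on places used earlier and recorded in \cite[Ch.~II, Prop.~9.1]{Neukirch} — there is $\tau\in\gal(E/E_\infty)$ with $\tau(\sigma_0(r))=y$; set $s(y)=\tau\sigma_0$. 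Then $s(y)(r)=y$ and $s(y)|_{K_i}=s_i(y|_{K_i})$ for every $i$, and $s(r)$ may be taken to be the identity.

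Finally I would set $\lambda(x,y)=s(y)\circ s(x)^{-1}$. Then \ref{Identity2} and \ref{Transitive2} are immediate, and \ref{Action2} follows since $s(x)^{-1}(x)=r$ gives $\lambda(x,y)(x)=s(y)(r)=y$. For \ref{Containment2}, if $x$ and $y$ divide the same place of $K_i$ then $x|_{K_i}=y|_{K_i}$, so $s(x)|_{K_i}=s_i(x|_{K_i})=s_i(y|_{K_i})=s(y)|_{K_i}$, and hence $\lambda(x,y)=s(y)s(x)^{-1}$ restricts to the identity on $K_i$, i.e.\ $\lambda(x,y)\in\gal(E/K_i)$. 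All the checks in this last step are routine once $s$ has been constructed with the tower-compatibility property, so essentially all the work lies in the recursive construction of the maps $s_i$.
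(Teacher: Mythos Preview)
Your argument is correct and takes a genuinely different route from the paper's. The paper builds, for each $n$, a map $\lambda_n:Y(E/K,v)^2\to\gal(E/K)$ satisfying \ref{Identity2}--\ref{Transitive2} together with the containment condition only for $i\le n$; crucially these $\lambda_n$ are \emph{not} compatible with one another, so the paper then invokes Tychonoff on the compact product $\gal(E/K)^{Y(E/K,v)^2}$ to extract a limit point $\lambda$ of the sequence $\{\lambda_n\}$, and must verify each of the four properties at that limit point by separate contradiction arguments using Hausdorffness and continuity of the Galois action. You instead build \emph{compatible} finite-level sections $s_i:W_i\to\gal(K_i/K)$, so the passage to $E$ is a simple nested-intersection argument in the compact group $\gal(E/K)$ (plus a transitivity correction over $E_\infty$), and the formula $\lambda(x,y)=s(y)s(x)^{-1}$ makes all four properties immediate. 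Your approach is more economical: the recursion lives entirely among finite sets and finite groups, the compactness step is lighter, and no limit-point verification is needed. The paper's approach, by contrast, avoids introducing the auxiliary field $E_\infty$ and the associated transitivity statement for $\gal(E/E_\infty)$ (which, while standard, lies slightly outside the $K\in\I_E$ framework set up earlier), at the cost of a heavier endgame.
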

\begin{proof}
	We will first prove that for every $n\in \nat_0$ there exists a map $\lambda_n:Y(E/K,v)\times Y(E/K,v)\to \gal(E/K)$ satisfying the following properties:
	\begin{enumerate}[label={(\alph*)}]
		\item\label{Identity} $\lambda_n(x,x)$ is the identity of $\gal(E/K)$ for all $x\in Y(K,v)$
		\item\label{Action} $\lambda_n(x,y)(x) = y$ for all $x,y\in Y(K,v)$
		\item\label{Transitive} $\lambda_n(y,z) \lambda_n(x,y) = \lambda_n(x,z)$ for all $x,y,z\in Y(K,v)$
		\item\label{Containment} If $i\leq n$ and $x$ and $y$ are places of $E$ dividing the same place of $K_i$ then $\lambda_n(x,y) \in \gal(E/K_i)$.
	\end{enumerate}
	We will proceed by induction on $n$ beginning with the base case $n=0$.  Select a place $r\in Y(E/K,v)$.  Since $\gal(E/K)$ acts transitively on $Y(E/K,v)$, for each $x\in Y(E/K,v)$ we may
	select $\sigma_x\in \gal(E/K)$ such that $\sigma_x(x) = r$.  Now define $\lambda_0:Y(E/K,v)\times Y(E/K,v)\to \gal(E/K)$ by
	\begin{equation*}
		\lambda_0(x,y) = \sigma_y^{-1} \sigma_x.
	\end{equation*}
	Under this definition, one easily verifies the four required properties.

	Now assume that $\lambda_n$ is a map satisfying \ref{Identity}, \ref{Action}, \ref{Transitive} and \ref{Containment} and define the following expressions:
	\begin{itemize}
		\item For each $x\in Y(E/K,v)$ we let $w_x$ denote the unique place in $Y(K_{n+1}/K,v)$ such that $x\mid w_x$. 
		\item For each $w\in Y(K_{n+1}/K,v)$, select a place $r_w\in Y(E/K_{n+1},w)$.  In this notation $x\in Y(E/K_{n+1},w_x)$ and $r_{w_x} \in Y(E/K_{n+1},w_x)$.
		\item For each $x\in Y(E/K,v)$ let $\tau_x\in \gal(E/K_{n+1})$ be such that $\tau_x(x) = r_{w_x}$.  Since $\gal(E/K_{n+1})$ acts transitively on $Y(E/K_{n+1},w_x)$, 
			such a map must necessarily exist.
	\end{itemize}
	Now we shall define $\lambda_{n+1}: Y(E/K,v)\times Y(E/K,v)\to \gal(E/K)$ by
	\begin{equation*}
		 \lambda_{n+1}(x,y) = \tau_y^{-1} \lambda_n(r_{w_x},r_{w_y}) \tau_x.
	\end{equation*}
	We must establish properties \ref{Identity}, \ref{Action}, \ref{Transitive} and \ref{Containment} with $n+1$ in place of $n$.  We verify easily that the first three properties hold,
	so it remains to establish \ref{Containment}.  To see this, assume that  $i\leq n+1$ and that $x$ and $y$ are places of $E$ dividing the place $w$ of $K_{i}$.  
	By our assumptions, we know that
	\begin{equation} \label{TauGalois}
		\tau_x,\tau_y\in \gal(E/K_{n+1}) \subseteq \gal(E/K_i).
	\end{equation}
	If $i=n+1$ then $w_x = w_y$ and $\lambda_n(r_{w_x},r_{w_y})$ is equal to the identity by property \ref{Identity}.  This means that $\lambda_{n+1}(x,y) = \tau_y^{-1} \tau_x$
	and it follows that $\lambda_{n+1}(x,y) \in \gal(E/K_i)$ as required.  

	Now assuming that $i\leq n$, we know that $x$ and $r_{w_x}$ both divide the same place of $K_{n+1}$, and hence, they must divide the same place of $K_i$.  That is, 
	$x$ and $r_{w_x}$ both divide $w$.  Similarly, $y$ and $r_{w_y}$ both divide $w$ as well.  We obtain from \ref{Containment} that $\lambda_n(r_{w_x},r_{w_y}) \in \gal(E/K_i)$.
	Now applying the definition of $\lambda_{n+1}$ along with \eqref{TauGalois}, we conclude that $\lambda_{n+1}(x,y)\in \gal(E/K_i)$.  
	
	We may now assume that, for every $n\in \nat_0$, $\lambda_n:Y(E/K,v)\times Y(E/K,v)\to \gal(E/K)$ satisfies \ref{Identity}, \ref{Action}, \ref{Transitive} and \ref{Containment}.
	Let $G = \gal(E/K)$ and $I = Y(E/K,v)\times Y(E/K,v)$.  Write $G^I$ to denote the set of functions from $I$ to $G$ and equip $G^I$ with the product topology.
	If $\{\lambda_n\}_{n=0}^\infty$ is a finite list of distinct points in $G^I$, then this sequence has a constant subsequence and we may take $\lambda$ to be that constant.
	In this case, we immediately obtain the required properties.
	
	We now assume this sequence defines an infinite set.  According to Tychonoff's Theorem \cite[Ch. 5, Theorem 37.3]{Munkres}, $G^I$ is compact, and therefore, $G^I$ is limit point compact.  
	As a result, $\{\lambda_n: n\in \nat_0\}$ must have a limit point $\lambda\in G^I$, so in particular, $\lambda(x,y)$ is a limit point of $\{\lambda_n(x,y): n\in \nat_0\}$ for all $x,y\in Y(E/K,v)$.  
	We now verify that $\lambda$ satisfies the four required properties.
	
	For \ref{Identity2}, assume that $\lambda(x,x) \ne 1$.  Since $G$ is Hausdorff, there exists an open neighborhood $U$ of $\lambda(x,x)$ such that $1\not\in U$.  We may select
	$n\in \nat_0$ such that $\lambda_n(x,x)\in U$, so \ref{Identity} implies that $1\in U$, a contradiction.
		
	To prove \ref{Action2}, suppose that $\lambda(x,y)(x)\ne y$ and select an open neighborhood $U$ of $\lambda(x,y)(x)$ such that $y\not\in U$.  Now define $f:G\times Y(E/K,v)\to Y(E/K,v)$
	by $f(\sigma,x) = \sigma(x)$ and observe that $(\lambda(x,y),x)\in f^{-1}(U)$.  However, \cite[Lemma 3]{AllcockVaaler} asserts that $f$ is continuous, so there exist open sets
	$A \subseteq G$ and $B \subseteq Y(E/K,v)$ such that
	\begin{equation*}
		(\lambda(x,y),x) \in A\times B \subseteq f^{-1}(U).
	\end{equation*}
	Now we may let $n\in \nat_0$ be such that $\lambda_n(x,y)\in A$.  We obtain that $(\lambda_n(x,y),x) \in A\times B \subseteq f^{-1}(U)$ which means that
	$\lambda_n(x,y)(x) \in U$.  Then using \ref{Action}, we get $y\in U$, a contradiction.
			
	In order to establish \ref{Transitive2}, assume that $\lambda(y,z)\lambda(x,y) \ne \lambda(x,z)$ so that $$\lambda(y,z)\lambda(x,y)\lambda(x,z)^{-1} \ne 1$$ and select an open neighborhood $U$ of 
	$\lambda(y,z)\lambda(x,y)\lambda(x,z)^{-1}$ such that $1\not\in U$.  We define $g:G\times G\times G \to G$ by $g(\sigma, \tau,\rho) = \sigma\tau\rho^{-1}$ and note that
	$$(\lambda(y,z),\lambda(x,y),\lambda(x,z)) \in g^{-1}(U).$$  Since $G$ is a topological group, $g$ must be continuous, so there exist open sets $U_1, U_2, U_3\subseteq G$ such that
	\begin{equation*}
		(\lambda(y,z),\lambda(x,y),\lambda(x,z)) \in U_1\times U_2\times U_3 \subseteq g^{-1}(U).
	\end{equation*}
	However, $\lambda$ is a limit point of $\lambda_n$ in $G^I$ with the product topology, so there exists $n\in \nat_0$ such that
	\begin{equation*}
		(\lambda_n(y,z),\lambda_n(x,y),\lambda_n(x,z)) \in U_1\times U_2\times U_3 \subseteq g^{-1}(U).
	\end{equation*}
	It follows that $\lambda_n(y,z)\lambda_n(x,y)\lambda_n(x,z)^{-1} \in U$, and then \ref{Transitive} yields $1\in U$, a contradiction.
	
	Finally, to show that \ref{Containment2} holds, suppose that $i\in \nat_0$ and that $x$ and $y$ divide the same place of $K_i$.  Further assume that $\lambda(x,y)\not\in \gal(E/K_i)$ and note that 
	$\gal(E/K_i)$ is closed in $G$.  Now define
	\begin{equation*}
		B = \gal(E/K_i) \cup\{\lambda_n(x,y): n < i \mbox{ and }\ \lambda_n(x,y)\ne \lambda(x,y)\}
	\end{equation*}
	so that $B$ is closed.  Clearly $\lambda(x,y)\not \in B$ so we select an open neighborhood $U$ of $\lambda(x,y)$
	which contains no points from $B$.  There exists $n\in \nat_0$ such that $\lambda_n(x,y)\in U\setminus\{\lambda(x,y)\}$.  We cannot have $n < i$ because then $\lambda_n(x,y)\in B$,
	which would contradict our assumptions about $U$.  This means that $n\geq i$ and \ref{Containment} implies that $\lambda_n(x,y)\in \gal(E/K_i)$, also a contradiction.
\end{proof}

\begin{proof}[Proof of Theorem \ref{TransitionExistence}]
	We know that $\I_E$ is countable, so we may assume that $\{K'_i: i \in \nat_0\} = \I_E$ is such that $K = K'_0$.  
	We now recursively define $K_i$ by setting $K_0 = K'_0$ and $K_{i} = K_{i-1}K'_{i}$ for all $i\in \nat$.  From this definition, we obtain that the $K_i$ satisfy the assumptions of Lemma 
	\ref{BetterNumberFieldStack}.  Therefore, there exists a map $\lambda:Y(E/K,v)\times Y(E/K,v)\to \gal(E/K)$ satisfying properties \ref{Identity2},\ref{Action2}, \ref{Transitive2} and \ref{Containment2}.
	As a result, $\lambda$ immediately is known to satisfy the first three required properties.
	
	To establish continuity, we shall assume that $H$ is an open normal subgroup of $G := \gal(E/K)$ and $\sigma \in G$.  We must prove that $\lambda^{-1}(\sigma H)$ is open in $Y(E/K,v)\times Y(E/K,v)$.
	There exists a finite Galois extension $L/K$ with $L\subseteq E$ such that $H = \gal(E/L)$.  The Galois closure $L'$ of $L$ over $F$ belongs to $\mathcal I_E$, and hence, there exists $i\in \nat_0$ such
	$L' = K'_i$.  This means that $L\subseteq K_i$, and as a result, Lemma \ref{BetterNumberFieldStack}\ref{Containment2} implies that 
	\begin{equation} \label{DiagInterior}
		Y(E/K_i,w)\times Y(E/K_i,w)\subseteq \lambda^{-1}( \gal(E/K_i)) \subseteq \lambda^{-1}(\gal(E/L)) = \lambda^{-1}(H)
	\end{equation}
	for all places $w$ of $K_i$ that divide $v$.
	
	Now let $(x,y)\in \lambda^{-1}(\sigma H)$ so that
	\begin{equation} \label{LambdaContain}
		\sigma^{-1}\lambda(x,y) \in H.
	\end{equation}
	It follows from \eqref{DiagInterior} that there exist open sets $U$ and $V$ in $Y(E/K,v)$ such that
	\begin{equation} \label{XContain}
		(x,x)\subseteq U\times U \subseteq \lambda^{-1}(H)\quad\mbox{and}\quad (y,y)\subseteq V\times V\subseteq \lambda^{-1}(H).
	\end{equation}
	To finish proving our claim that $\lambda$ is continuous, it is enough to show that 
	\begin{equation*} \label{XYContain}
		(x,y) \in U\times V\subseteq \lambda^{-1}(\sigma H).
	\end{equation*}
	It is obvious that $(x,y)\in U\times V$ in view of \eqref{XContain}.  Therefore, we assume that $(x_0,y_0)\in U\times V$ and note that Lemma \ref{BetterNumberFieldStack} \ref{Transitive2} yields
	\begin{equation} \label{Transitivity}
		\sigma^{-1}\lambda(x_0,y_0) = \sigma^{-1}\lambda(y,y_0)\lambda(x,y)\lambda(x_0,x).
	\end{equation}
	Since $(y,y_0)\in V\times V$, we obtain from \eqref{XContain} that $\lambda(y,y_0)\in H$, and since $H$ is normal in $G$, we also conclude that $\sigma^{-1}\lambda(y,y_0)\in H\sigma^{-1}$.
	In other words, there exists $h\in H$ such that $\sigma^{-1}\lambda(y,y_0) = h\sigma^{-1}$.  Then we apply \eqref{Transitivity} to deduce that
	\begin{equation*}
		\sigma^{-1}\lambda(x_0,y_0) = h\sigma^{-1} \lambda(x,y)\lambda(x_0,x).
	\end{equation*}
	We already know that $h\in H$, from \eqref{LambdaContain} we have that $\sigma^{-1} \lambda(x,y) \in H$, and finally, the first statement of \eqref{XContain} shows that $\lambda(x_0,x)\in H$.
	It follows that $\sigma^{-1}\lambda(x_0,y_0) \in H$ so that $(x_0,y_0)\in\lambda^{-1}(\sigma H)$, as required.
		
\end{proof}

The proof of Theorem \ref{MainTransfer} requires a lemma establishing the continuity of a certain map.  If $y$ is a place of $E$, we shall write $B_y(c,\rho) = \{\alpha\in E_y:|\alpha - c|_y < \rho\}$
for the ball centered at $c\in E_y$ of radius $\rho > 0$.

\begin{lem} \label{SContinuity}
	Suppose $E/F$ is a Galois extension and $p\in Y_F$.  Let $r,s\in Y(E/F,p)$ and define $S = \{\sigma\in \gal(E/F): \sigma(r) = s\}$.
	Then the map $\psi:S\times E_r\to E_s$ given by $\psi(\sigma,\alpha)= \sigma(\alpha)$ is continuous.
\end{lem}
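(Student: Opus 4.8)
The plan is to verify continuity of $\psi$ directly at an arbitrary point $(\sigma_0,\alpha_0)\in S\times E_r$, using two ingredients: that $E$ is dense in its completion $E_r$, and that $\gal(E/F)$ acts locally constantly on each fixed element of $E$ in the Krull topology. (Note that $\psi$ is well defined since every $\sigma\in S$ carries $r$ to $s$ and hence extends to an isometric isomorphism $E_r\to E_s$; and if $S=\emptyset$ the statement is vacuous, although in fact $S\ne\emptyset$ by transitivity of the action of $\gal(E/F)$ on $Y(E/F,p)$.)

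First I would reduce to algebraic elements. Fix $\varepsilon>0$ and choose $\alpha_0'\in E$ with $|\alpha_0-\alpha_0'|_r<\varepsilon/3$, which is possible since $E_r$ is the completion of $E$. For any $\sigma\in S$ and $\alpha\in E_r$, the ordinary triangle inequality in $E_s$ gives
\begin{equation*}
	|\sigma(\alpha)-\sigma_0(\alpha_0)|_s\le |\sigma(\alpha)-\sigma(\alpha_0')|_s+|\sigma(\alpha_0')-\sigma_0(\alpha_0')|_s+|\sigma_0(\alpha_0')-\sigma_0(\alpha_0)|_s.
\end{equation*}
Because $\sigma$ and $\sigma_0$ both extend to isometric isomorphisms $E_r\to E_s$, the first term on the right equals $|\alpha-\alpha_0'|_r$ and the third equals $|\alpha_0'-\alpha_0|_r<\varepsilon/3$. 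So everything comes down to the middle term.

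Next I would invoke the Krull topology. Let $M\in\I_E$ be a finite Galois extension of $F$ inside $E$ with $\alpha_0'\in M$; such an $M$ exists, e.g.\ the Galois closure of $F(\alpha_0')$ over $F$, which lies in $E$ because $E/F$ is Galois. Then $\gal(E/M)$ is open in $\gal(E/F)$, so $W:=S\cap\sigma_0\gal(E/M)$ is an open neighborhood of $\sigma_0$ in $S$. For $\sigma\in W$ write $\sigma=\sigma_0\rho$ with $\rho\in\gal(E/M)$; since $\rho$ fixes $\alpha_0'\in M$, we get $\sigma(\alpha_0')=\sigma_0(\alpha_0')$, so the middle term vanishes. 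Hence for $(\sigma,\alpha)\in W\times B_r(\alpha_0,\varepsilon/3)$ the displayed inequality yields $|\sigma(\alpha)-\sigma_0(\alpha_0)|_s\le|\alpha-\alpha_0'|_r+|\alpha_0'-\alpha_0|_r<\varepsilon$, which proves continuity of $\psi$ at $(\sigma_0,\alpha_0)$.

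The argument is essentially routine; the only genuine idea — and the one point deserving care — is the initial reduction to $\alpha_0'\in E$. Once that is in place, the potentially delicate middle term $|\sigma(\alpha_0')-\sigma_0(\alpha_0')|_s$ is not merely small but identically zero on a Krull-open neighborhood of $\sigma_0$, precisely because $\gal(E/F)$ acts through a finite quotient on any fixed element of $E$. No deeper input about the structure of $E_r$ (such as local compactness) is required.
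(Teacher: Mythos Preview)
Your proof is correct and follows essentially the same approach as the paper's: approximate $\alpha_0$ by an element of $E$, then use that any element of $E$ is fixed by a Krull-open subgroup, together with the isometry property of each $\sigma\in S$. The only cosmetic differences are that the paper splits with $\varepsilon/2$ rather than $\varepsilon/3$ and centers the $E_r$-ball at the algebraic approximant $\beta$ rather than at $\alpha_0$; the underlying idea is identical.
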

\begin{proof}	
	Assume $U$ is an open subset of $E_s$ and let $(\sigma,\alpha)\in \psi^{-1}(U)$.  Therefore, $\sigma(\alpha)\in U$ and there exists $\varepsilon > 0$ such that
	\begin{equation} \label{UBall}
		B_s(\sigma(\alpha),\varepsilon) \subseteq U.
	\end{equation}
	Since $E$ is dense in $E_s$ we may choose $\beta\in E$ such that 
	\begin{equation} \label{BetaApprox}
		|\sigma(\beta) - \sigma(\alpha)|_s < \frac{\varepsilon}{2}.
	\end{equation}
	Now let $L\in I_E$ be such that $\beta\in L$ and define $H = \gal(E/L)$ so that $H$ is an open subgroup of $\gal(E/F)$.
	It follows that $\sigma H$ is open in $\gal(E/F)$ and $\sigma H \cap S$ is open in $S$.  We define
	\begin{equation*}
		V = (\sigma H \cap S) \times B_r\left(\beta,\frac{\varepsilon}{2}\right)
	\end{equation*}
	and note that $V$ is open in $S\times E_r$.  Clearly $\sigma\in \sigma H\cap S$.  Also
	\begin{equation*}
		|\beta - \alpha|_r = |\sigma(\beta) - \sigma(\alpha)|_{\sigma(r)} = |\sigma(\beta) - \sigma(\alpha)|_s < \frac{\varepsilon}{2}
	\end{equation*}
	which means that $\alpha\in B_r(\beta,\varepsilon/2)$.  Therefore, $(\sigma,\alpha)\in V$ and it remains only to show that $V\subseteq \phi^{-1}(U)$.
	
	Assume that $(\tau,\gamma)\in V$ so that $\gamma\in \sigma H\cap S$ and $|\gamma - \beta|_r < \varepsilon/2$.  We get from \eqref{BetaApprox} that
	\begin{equation*}
		|\tau(\gamma) - \sigma(\alpha)|_s = |\tau(\gamma) - \sigma(\beta) + \sigma(\beta) -\sigma(\alpha)|_s < |\tau(\gamma) - \sigma(\beta)|_s + \frac{\varepsilon}{2}.
	\end{equation*}
	We may select $h\in H$ such that $\tau = \sigma h$.  This means that $h(\beta) = \beta$ and
	\begin{equation*}
		|\tau(\gamma) - \sigma(\beta)|_s = |\tau(\gamma) - \sigma(h(\beta))|_s = |\tau(\gamma) - \tau(\beta)|_s = |\gamma - \beta|_r < \frac{\varepsilon}{2}.
	\end{equation*}
	Combining these observations yields that $|\tau(\gamma) - \sigma(\alpha)|_s < \varepsilon$ and it follows from \eqref{UBall} that $\tau(\gamma)\in U$.
\end{proof}

\begin{proof}[Proof of Theorem \ref{MainTransfer}]
	Let $f_v:Y(E/K,v)\to E_{r_v}$ and $g_w:Y(E/L,w)\to E_{s_w}$ be given by
	\begin{equation*}
		f_v(y) = \lambda_v(y,r_v)(a_y)\quad\mbox{and}\quad g_w(y) = \mu_w(y,s_w)(a_y).
	\end{equation*}
	Without loss of generality, it is enough to assume that $f_v$ is continuous for all $v\in Y_K$ and to show that $g_w$ is continuous for all $w\in Y_L$.  
	Thus, we assume that $f_v$ is continuous for all $v\in Y_K$.
	
	Assume that $t\in Y(E/L,w)$ and let $v\in Y_K$ be such that $t\in Y(E/K,v)$.  Set $T = Y(E/K,v)\cap Y(E/L,w)$ so that $T$ is an open neighborhood of $t$.  To complete the proof, it is enough
	to show that $g_w$ is continuous on $T$.  To see this, we let $S = \{\sigma\in \gal(E/F): \sigma(r_v) = s_w\}$ and define the following four maps:
	\begin{align*}
		 d: T\to T\times T \quad\quad & d(x) = d(x,x) \\
		 f:T\to E_{r_v} \quad\quad & f(x) = \lambda_v(y,r_v)(a_y) \\
		 \phi: T\to S\quad\quad & \phi(x) = \mu_w(y,s_w)\lambda_v(r_v,y) \\
		 \psi: S \times E_{r_v}\to E_{s_w}\quad\quad & \psi(\sigma,\alpha) = \sigma(\alpha).
	\end{align*}
	For every point $y\in T$, we have that
	\begin{equation*}
		g_w(y) = (\mu_w(y,s_w)\lambda_v(r_v,y)\lambda_v(y,r_v))(a_y) = \psi(\phi(y),f(y)) = (\psi\circ(\phi\times f)\circ d)(y),
	\end{equation*}
	so to complete the proof, it is sufficient to establish that $d, f, \phi$ and $\psi$ are continuous.
	
	Clearly $d$ is continuous, and furthermore, $f$ is simply the restriction of $f_v$ to $T$, so it is also continuous.  The continuity of $\phi$ follows from property \ref{ContinuousMain} and the fact
	that $\gal(E/F)$ is a topological group.  Finally, Lemma \ref{SContinuity} shows that $\psi$ is continuous establishing the theorem.

\end{proof}

\begin{proof}[Proof of Theorem \ref{TopologyDefine}]
	Assume that $J(K,\lambda_v,r_v;U)$ is open in $Y(E/K,v)\times E_{r_v}$ for all $v\in Y_K$ and let $M = KL$.  Now fix a place $w\in Y_L$ and consider
	\begin{equation*}
		J(L,\mu_w,s_w;U) = \bigcup_{z\in Y(M/L,w)}\left( \bigcup_{y\in Y(E/M,z)} \left( \{y\}\times \mu_w(y,s_w)(U_y)\right)\right).
	\end{equation*}
	To establish \ref{JL}, we shall assume that $z\in Y(M/L,w)$ and let
	\begin{equation} \label{SpecialJ}
		A = \bigcup_{y\in Y(E/M,z)} \left( \{y\}\times \mu_w(y,s_w)(U_y)\right).
	\end{equation}
	It is certainly enough to prove that $A$ is open in $Y(E/L,w)\times E_{s_w}$.  However, since $Y(E/M,z)\times E_{s_w}$ is open in $Y(E/L,w)\times E_{s_w}$, it is actually sufficient to prove that $A$
	is open in $Y(E/M,z)\times E_{s_w}$.
	
	To see this, assume that $v$ is the unique place of $K$ such that $z\mid v$.  We have assumed that $J(K,\lambda_v,r_v;U)$ is open in $Y(E/K,v)\times E_{r_v}$, and therefore, if we set 
	$B = J(K,\lambda_v,r_v;U) \cap (Y(E/M,z)\times E_{s_w})$, we conclude that $B$ is open in $Y(E/M,z)\times E_{s_w}$.  However,
	\begin{align*}
		B & = \left(\bigcup_{y\in Y(E/K,v)} \left(\{y\}\times \lambda_v(y,r_v)(U_y)\right)\right) \cap \left(Y(E/M,z)\times E_{r_v}\right) \\
		& = \left(\bigcup_{y\in Y(E/M,z)} \left(\{y\}\times \lambda_v(y,r_v)(U_y)\right)\right) \cap \left(Y(E/M,z)\times E_{r_v}\right) \\
		& = \bigcup_{y\in Y(E/M,z)} \left( \{y\}\times  \lambda_v(y,r_v)(U_y)\right).
	\end{align*}
	Now we define the map $f:Y(E/M,z)\times E_{s_w}\to Y(E/M,z)\times E_{r_v}$ by 
	\begin{equation*}
		f(y,\alpha) =  (y, \lambda_v(y,r_v)\mu_w(s_w,y)(\alpha)).
	\end{equation*}
	By applying Lemma \ref{SContinuity}, we may verify that $f$ is a continuous bijection, and moreover, it satisfies
	\begin{equation*}
		f(A) = \bigcup_{y\in Y(E/M,z)} f(\left( \{y\}\times \mu_w(y,s_w)(U_y)\right)) =  \bigcup_{y\in Y(E/M,z)} \left( \{y\}\times \lambda_v(y,r_v)(U_y)\right) = B,
	\end{equation*}
	where the penultimate equality follows from \ref{IdentityMain} and \ref{TransitiveMain}.  Therefore, $A = f^{-1}(B)$ and the result follows.
\end{proof}

\section{Proof of Theorem \ref{Main}\ref{TopologicalRingMain}} \label{TopologicalRingSection}

Now that we have defined $\overline{\crazyadele}_E$ and equipped it with a topology, we may proceed with our proof of Theorem \ref{Main}\ref{TopologicalRingMain} which begins with a general topological lemma.  
For Lemmas \ref{OpenTube} and \ref{FieldTopRing}, we shall write $B(x,r)$ to denote the open ball in $X$ centered at $x$ of radius $r$.

\begin{lem} \label{OpenTube}
	Suppose $I$ is a compact space and $X$ is a field with an absolute value $|\ |$.  Further assume that $f:I\to X$ is continuous and $\Gamma_i\subseteq X$ is such that $f(i)\in \Gamma_i$ for all $i\in I$.  
	Finally, assume that
	\begin{equation*}
		\bigcup_{i\in I} \left(\{i\}\times \Gamma_i\right)
	\end{equation*}
	is open in $I\times X$.  Then there exists $\varepsilon > 0$ such that $B(f(i),\varepsilon) \subseteq \Gamma_i$ for all $i\in I$.
\end{lem}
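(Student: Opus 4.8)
The plan is a pointwise-to-uniform compactness argument, in the spirit of the tube lemma but producing a uniform radius around the graph of $f$. First I would fix $i\in I$. The point $(i,f(i))$ lies in the set $\bigcup_{j\in I}(\{j\}\times\Gamma_j)$, which is open in $I\times X$, so there is a basic open neighborhood $U_i\times V_i$ of $(i,f(i))$ contained in it, where $U_i$ is open in $I$ and $V_i$ is open in $X$. Since $|\ |$ induces a metric on $X$, I may shrink $V_i$ and assume $V_i=B(f(i),2\varepsilon_i)$ for some $\varepsilon_i>0$. Unwinding the containment $U_i\times B(f(i),2\varepsilon_i)\subseteq\bigcup_j(\{j\}\times\Gamma_j)$: for each $j\in U_i$ and each $\alpha\in B(f(i),2\varepsilon_i)$, the pair $(j,\alpha)$ must lie in $\{j\}\times\Gamma_j$, so $B(f(i),2\varepsilon_i)\subseteq\Gamma_j$ for every $j\in U_i$.

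Second, I would use continuity of $f$ to replace $U_i$ by a smaller open neighborhood of $i$ on which $f(U_i)\subseteq B(f(i),\varepsilon_i)$. Then for $j\in U_i$ and any $\beta$ with $|\beta-f(j)|<\varepsilon_i$, the triangle inequality gives $|\beta-f(i)|\leq|\beta-f(j)|+|f(j)-f(i)|<2\varepsilon_i$, so $\beta\in\Gamma_j$ by the previous step. Hence $B(f(j),\varepsilon_i)\subseteq\Gamma_j$ for all $j\in U_i$; the purpose of passing to radius $2\varepsilon_i$ at the outset was precisely to leave room to absorb the displacement $|f(j)-f(i)|$ here.

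Finally, $\{U_i:i\in I\}$ is an open cover of the compact space $I$, so it admits a finite subcover $U_{i_1},\dots,U_{i_n}$. I would then set $\varepsilon=\min\{\varepsilon_{i_1},\dots,\varepsilon_{i_n}\}>0$. Given any $j\in I$, choose $k$ with $j\in U_{i_k}$; then $B(f(j),\varepsilon)\subseteq B(f(j),\varepsilon_{i_k})\subseteq\Gamma_j$, which is the desired conclusion.

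There is no real obstacle here: the argument is routine once the radii are arranged correctly. The only point demanding attention is the bookkeeping with the factor of $2$ (together with the implicit use of the triangle inequality for $|\ |$, which holds for any absolute value on a field), so in writing it up I would commit to the radius $2\varepsilon_i$ from the start rather than attempting to patch the constant in afterwards.
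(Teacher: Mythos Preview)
Your argument is correct. The pointwise choice of $U_i\times B(f(i),2\varepsilon_i)$, the shrinking of $U_i$ via continuity of $f$, and the extraction of a finite subcover all work as stated; the factor-of-$2$ bookkeeping is exactly what is needed to recenter the balls from $f(i)$ to $f(j)$.

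The paper's proof is different and somewhat slicker. Rather than carrying the moving center $f(i)$ through the compactness argument, it defines the fiberwise translation $g(i,x)=(i,x-f(i))$, checks that $g$ is a homeomorphism of $I\times X$ (continuity of $f$ enters here), and observes that $g$ carries the open set $\bigcup_i(\{i\}\times\Gamma_i)$ to an open set containing the constant slice $I\times\{0\}$. Then a single application of the standard Tube Lemma yields a tube $I\times B(0,\varepsilon)$ inside the translated set, and translating back gives $B(f(i),\varepsilon)\subseteq\Gamma_i$ for all $i$. Your approach effectively re-proves the tube lemma in situ while simultaneously handling the varying center via the radius-doubling trick; the paper isolates those two steps by first straightening the graph and then invoking the tube lemma as a black box. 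Both are short, but the translation idea avoids the $2\varepsilon_i$ bookkeeping entirely.
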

\begin{proof}
	Define a function $g\colon I\times X\to I\times X$ by $g(i,x)=(i,x-f(i))$. Clearly $g$ is invertible with $g^{-1}(i,x)=(i,x+f(i))$, and it is also clear that $g$ and $g^{-1}$ are continuous making a $g$ a homeomorphism. 
	For each $i\in I$, let $\widetilde{\Gamma}_i=\{x\in X\colon x+f(i)\in\Gamma_i\}$. Then 
	\begin{equation*}
		I\times\{0\}\subseteq\bigcup_{i\in I}\left(\{i\}\times\widetilde{\Gamma}_i\right)=g\left(\bigcup_{i\in I}\left(\{i\}\times\Gamma_i\right)\right)
	\end{equation*}
	which is open in $I\times X$ since $g$ is a homeomorphism.  If $c\in X$ we shall write $B(c,\varepsilon) = \{x\in X: |x - c| < \varepsilon\}$.
	Since $I$ is compact, by the Tube Lemma \cite[Ch.~3, Lemma~26.8]{Munkres} there exists $\varepsilon>0$ such that 
	\begin{equation*}
		I\times B(0,\varepsilon)\subseteq\bigcup_{i\in I}\left(\{i\}\times\widetilde{\Gamma}_i\right).
	\end{equation*}
	It follows that for each $i\in I$ we have $B(0,\varepsilon)\subseteq\widetilde{\Gamma}_i$, and thus, $B(f(i),\varepsilon)\subseteq\Gamma_i$.
\end{proof}

If $X$ is any ring we shall define the maps $\Add:X\times X\to X$ and $\Mult:X\times X\to X$ is the obvious ways:
\begin{equation*}
	\Add(x,y) = x+y\quad\mbox{and}\quad \Mult(x,y) = xy.
\end{equation*}
The following basic lemma describes the behavior of these maps on an arbitrary field with absolute value.

\begin{lem}\label{FieldTopRing}
	Suppose that $X$ is a field with absolute value $|\ |$.  If $x,y\in X$ and $r>0$ then
	\begin{equation*}
		B\left( x,\frac{r}{2}\right)\times B\left( y,\frac{r}{2}\right) \subseteq \Add^{-1}\left( B(x+y,r)\right)
	\end{equation*}
	and
	\begin{equation*}
		B\left( x,\min\left\{1,\frac{r}{1 + |x| + |y|}\right\}\right)\times B\left( y,\min\left\{1,\frac{r}{1 + |x| + |y|}\right\}\right)\subseteq \Mult^{-1}\left( B(xy,r)\right)
	\end{equation*}
\end{lem}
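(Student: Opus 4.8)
The plan is to verify both set containments directly from the defining properties of an absolute value on a field, namely the triangle inequality and multiplicativity $|\alpha\beta| = |\alpha|\,|\beta|$; nothing about topology beyond the definition of the balls $B(c,\varepsilon)$ enters.

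For the additive statement, I would take an arbitrary pair $(a,b) \in B(x,r/2)\times B(y,r/2)$, so that $|a-x| < r/2$ and $|b-y| < r/2$. Writing $\Add(a,b) - (x+y) = (a-x) + (b-y)$ and applying the triangle inequality gives $|\Add(a,b) - (x+y)| \le |a-x| + |b-y| < r$, which is precisely the assertion that $(a,b) \in \Add^{-1}(B(x+y,r))$.

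For the multiplicative statement, set $\delta = \min\{1,\ r/(1+|x|+|y|)\}$, noting $\delta > 0$ since $r>0$ and $1+|x|+|y|\geq 1$. Given $(a,b)$ with $|a-x| < \delta$ and $|b-y| < \delta$, the key algebraic identity is $\Mult(a,b) - xy = (a-x)b + x(b-y)$. Since $\delta \le 1$, we have $|b| \le |b-y| + |y| < \delta + |y| \le 1 + |y|$, and then multiplicativity together with the triangle inequality yields
\begin{equation*}
	|\Mult(a,b) - xy| \le |a-x|\,|b| + |x|\,|b-y| < \delta(1+|y|) + |x|\delta = \delta(1+|x|+|y|) \le r.
\end{equation*}
Hence $(a,b) \in \Mult^{-1}(B(xy,r))$, which completes the argument.

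There is no genuine obstacle here; the lemma is routine. The only point requiring any care is the choice of radius in the multiplicative case: one must include the factor $\min\{\,\cdot\,,1\}$ precisely so that the perturbed point $b$ remains in a ball of fixed radius about $0$, producing the uniform bound $|b| < 1+|y|$ needed to control the cross term $(a-x)b$.
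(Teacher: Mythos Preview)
Your proof is correct. The additive part is identical to the paper's. For the multiplicative part you use the decomposition $ab-xy=(a-x)b+x(b-y)$, whereas the paper uses the mirror image $ab-xy=a(b-y)+y(a-x)$; more notably, the paper then splits into the two cases $1\le r/(1+|x|+|y|)$ and $1>r/(1+|x|+|y|)$ and argues each separately, while you handle both at once by using $\delta\le 1$ to bound $|b|<1+|y|$ and then $\delta\le r/(1+|x|+|y|)$ to finish. Your single-chain estimate is slightly cleaner and makes the role of the $\min\{1,\cdot\}$ clearer, but the underlying idea is the same.
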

\begin{proof}
	If $(a,b)\in B(x,r/2)\times B(y,r/2)$ then
	\begin{equation*}
		|a+b - (x+y)| \leq |a-x| + |b-y| < \frac{r}{2} + \frac{r}{2} = r
	\end{equation*}
	and the first statement follows.  Now assume that $$(a,b)\in B\left(x,\min\left\{1,\frac{r}{1+|x| + |y|}\right\}\right)\times B\left(y,\min\left\{1,\frac{r}{1+|x| + |y|}\right\}\right).$$  If $1 \leq r/(1+|x| + |y|)$ then
	\begin{align*}
		|ab- xy| & = |ab - ay + ay - xy| \\
			& \leq |a|\cdot |b-y| + |y|\cdot|a-x| \\
			& \leq |a|+|y| \\
			& \leq |a-x| + |x| + |y| \\
			& <1 + |x| + |y| \\
			& \leq r.
	\end{align*}
	On the other hand, if $1 > r/(1+|x| + |y|)$ then we observe that
	\begin{equation*}
		|ab - xy| \leq  \frac{r(|a| + |y|)}{1 + |x| + |y|} \leq \frac{r(|a-x| + |x| + |y|)}{1 + |x| + |y|} < r
	\end{equation*}
	and the result follows.
\end{proof}

Before proceeding with the proof of Theorem \ref{Main}\ref{TopologicalRingMain}, we simplify our use of Cauchy, complete and completion.
Suppose that $R$ is a metrizable topological ring and $d$ is an invariant metric on $R$.  Given a sequence $\{\alpha_n\}_{n=1}^\infty$ in $R$, 
we recall that the following conditions are equivalent:
\begin{enumerate}[label={(\roman*)}]
	\item $\{\alpha_n\}_{n=1}^\infty$ is Cauchy with respect to $d$
	\item\label{NbhdCauchy2} For every open neighborhood $U$ of $0$ there exists $N\in \nat$ such that $\alpha_m - \alpha_n \in U$ for all $m,n\geq N$.
\end{enumerate}
We say that $\{\alpha_n\}_{n=1}^\infty$ is {\it Cauchy in $R$} if it is Cauchy with respect to $d$.  Similarly, $R$ is {\it complete} if it is complete with respect to $d$, 
and the {\it completion} $\overline R$ of $R$ is the completion of $G$ with respect to $d$.  In view of \ref{NbhdCauchy2}, these definitions depend only on the algebraic
and topological properties of $R$, so they are independent of the choice of invariant metric $d$.  As such, a sequence is Cauchy if and only if it is Cauchy with respect to every invariant metric on $R$.
Furthermore, our definition of $\overline R$ agrees with the definition used in the introduction.

\begin{proof}[Proof of Theorem \ref{Main}\ref{TopologicalRingMain}]
	Using the fact that $E_y$ is a topological field, it is straightforward to verify the ring axioms of $\overline{\crazyadele}_E$.  Now fix $K\in \I_E$, and for each place
	$v$ of $K$, we let $\lambda_v\in T(E/K,v)$ and $r_v\in Y(E/K,v)$. It remains to prove that the ring operations define continuous maps $\overline{\crazyadele}_E\times \overline{\crazyadele}_E \to \overline{\crazyadele}_E$, that
	$\overline{\crazyadele}_E$ is a metric space, and that $\overline{\crazyadele}_E$ is complete.
	
	{\bf Proof that the ring operations are continuous:} We begin with addition.  We shall assume that
	\begin{equation*}
		U = \prod_{y\in Y_E} U_y \subseteq \prod_{y\in Y_E} E_y
	\end{equation*}
	is a basis element in $\overline{\crazyadele}_E$, i.e., $U$ satisfies properties \ref{Integers} and \ref{Open}.  According to \ref{Open}, we know that 
	\begin{equation} \label{JRep}
		J(K,\lambda_v,r_v;U) = \bigcup_{y\in Y(E/K,v)}\left( \{y\}\times \lambda_v(y,r_v)(U_y)\right)
	\end{equation}
	is open in $Y(E/K,v)\times E_{r_v}$ for all places $v$ of $K$.  Suppose that $\mathfrak a = (a_y)_{y\in Y_E}$ and $\mathfrak b = (b_y)_{y\in Y_E}$ are points in $\overline{\crazyadele}_E$ such 
	that  $(\mathfrak a,\mathfrak b)\in \Add^{-1}(U)$.  
	
	By combining \ref{Compact} and \ref{Integers}, we obtain a compact set $Z\subseteq Y_E$ such that $a_y,b_y\in \mathcal O_y$ and $U_y = \mathcal O_y$ for all $y\in Y_E\setminus Z$.
	Therefore, we may assume that $S$ is a finite set of places of $K$ containing all archimedean places such that $Z \subseteq \cup_{v\in S} Y(E/K,v)$.
	Certainly $a_y,b_y\in \mathcal O_y$ and $U_y = \mathcal O_y$ for all $y$ not dividing a place in $S$.
	
	For each place $v\in Y_K$, define $f_v,g_v:Y(E/K,v)\to E_r$ by $f_v(y) = \lambda_v(y,r_v)(a_y)$ and $g(y) = \lambda_v(y,r_v)(b_y)$.  Since $\mathfrak a,\mathfrak b \in \overline{\crazyadele}_E$, 
	$f_v$ and $g_v$ are both continuous, and since $E_{r_v}$ is a topological field, $f_v +g_v$ is also continuous.   Moreover, $f_v(y) + g_v(y)\in \lambda_v(y,r_v)(U_y)$ for all $y\in Y(E/K,v)$.
	Then by applying Lemma \ref{OpenTube} and \eqref{JRep}, there exists $\varepsilon_v >0$ such that $B_r(f_v(y)+g_v(y),\varepsilon_v) \subseteq \lambda_v(y,r_v)(U_y)$ for all $y\in Y(E/K,v)$.  
	Using the fact that $\lambda_v(y,r_v)$ is an isometric isomorphism, we conclude that
	\begin{equation} \label{fgContain}
		B_y(a_y + b_y,\varepsilon_v) \subseteq U_y\quad\mbox{for all } y\in Y(E/K,v).
	\end{equation}
	Now set
	\begin{equation*}
		G_y = \begin{cases} B_y(a_y,\varepsilon_v/2) & \mbox{if } y \mbox { divides a place } v\in S \\ \mathcal O_y & \mbox{if } y \mbox{ does not divide a place in } S\end{cases}
	\end{equation*}
	and
	\begin{equation*}
		H_y = \begin{cases} B_y(b_y,\varepsilon_v/2) & \mbox{if } y \mbox { divides a place } v\in S \\ \mathcal O_y & \mbox{if } y \mbox{ does not divide a place in } S.\end{cases}
	\end{equation*}
	We define
	\begin{equation*}
		V = \prod_{y\in Y_E}G_y\quad\mbox{and}\quad W =  \prod_{y\in Y_E} H_y.
	\end{equation*}
	We must now show that $V$ and $W$ are open in $\overline{\crazyadele}_E$ and that $(\mathfrak a,\mathfrak b)\in V\times W \subseteq \Add^{-1}(U)$.  Certainly $(\mathfrak a,\mathfrak b)\in V \times W$
	and $V$ and $W$ satisfy \ref{Integers}.  Additionally, the continuity of $f_v$ and $g_v$ ensures that $V$ and $W$ satisfy \ref{Open} and we conclude that $V$ and $W$ are open.
	
	It remains to show that $V\times W\subseteq \Add^{-1}(U)$.  To see this, we suppose that $\mathfrak c = (c_y)_{y\in Y_E}$ and $\mathfrak d = (d_y)_{y\in Y_E}$ are such that 
	$(\mathfrak c,\mathfrak d)\in V\times W$.  If $y$ does not divide a place in $S$, then $c_y,d_y\in \mathcal O_y$, and hence, $c_y + d_y\in \mathcal O_y$.  But $U_y = \mathcal O_y$ for
	all $y$ not dividing a place in $S$, and therefore, $c_y + d_y\in U_y$.  If $y$ divides a place $v\in S$ then $(c_y,d_y)\in B_y(a_y,\varepsilon_v/2) \times B_y(b_y,\varepsilon_v/2)$.
	According to Lemma \ref{FieldTopRing}, we obtain that $c_y + d_y \in B_y(a_y + b_y,\varepsilon_v)$ and it follows from \eqref{fgContain} that $c_y+d_y\in U_y$.  We have now established
	that $\Add:\overline{\crazyadele}_E\times \overline{\crazyadele}_E\to \overline{\crazyadele}_E$ defines a continuous map.
	
	Next, suppose that $(\mathfrak a,\mathfrak b)\in \Mult^{-1}(U)$ and select $S$ in the same way as before.  In this case,
	we apply Lemma \ref{OpenTube} to obtain $\varepsilon_v >0$ such that $B_v(f_v(y)g_v(y),\varepsilon_v) \subseteq \lambda_v(y,r_v)(U_y)$ for all $y\in Y(E/K,v)$.
	Since $f_v$ and $g_v$ are continuous and $Y(E/K,v)$ is compact we may define $m_v = \max\{1 + |f_v(y)|_r + |g_v(y)|_r:y\in Y(E/K,v)\}$.  The we set
	\begin{equation*}
		G_y = \begin{cases} B_y(a_y,\min\{1,\varepsilon_v/m_v\}) & \mbox{if } y \mbox { divides a place } v\in S \\ \mathcal O_y & \mbox{if } y \mbox{ does not divide a place in } S\end{cases}
	\end{equation*}
	and
	\begin{equation*}
		H_y = \begin{cases} B_y(b_y,\min\{1,\varepsilon_v/m_v\}) & \mbox{if } y \mbox { divides a place } v\in S \\ \mathcal O_y & \mbox{if } y \mbox{ does not divide a place in } S.\end{cases}
	\end{equation*}
	The define
	\begin{equation*}
		V = \prod_{y\in Y_E}G_y\quad\mbox{and}\quad W =  \prod_{y\in Y_E} H_y.
	\end{equation*}
	From a similar application of Lemma \ref{FieldTopRing} we have completed the proof that $\overline{\crazyadele}_E$ is a topological ring.  
	
	{\bf Proof that $\overline{\crazyadele}_E$ is a metric space:}  According to the Birkhoff-Kakutani Theorem (see \cite{Birkhoff,Kakutani}), it is sufficient to show that $\{0\}$ is closed and that $0$ has a 
	countable base of neighborhoods.  If $\mathfrak a = (a_y)_{y\in Y_E} \ne 0$ then assume $x\in Y_E$ is such that $a_x\ne 0$.  We may assume that $v$ is the unique place of $K$ with $x\mid v$.
	Furthermore, using \ref{Compact}, we may assume that $S$ is a finite set of places of $K$ containing $v$ and all archimedean places such that $a_y \in \mathcal O_y$ for all 
	$y$ not dividing a place in $S$.  If we set
	\begin{equation*}
		U_y = \begin{cases} B_y(a_y,|a_x|_x) & \mbox{if } y \mbox{ divides a place in } S \\
			\mathcal O_y & \mbox{if } y \mbox{ does not divide a place in } S.
			\end{cases}
	\end{equation*}
	then using \ref{Continuous} we find that $U = \prod_{y\in Y_E} U_y$ is an open neighborhood of $\mathfrak a$ not containing $0$.
	
	We now show that $0$ has a countable base of neighborhoods.  For this purpose, let $Y_{K,\infty}$ denote the set of archimedean places of $K$ so that $Y_{K,\infty}$ is finite.
	Also define
	\begin{equation*}
		\Omega = \{S\subseteq Y_K: \#S < \infty\mbox{ and } Y_{K,\infty} \subseteq S\}
	\end{equation*}
	and let $\mathscr B$ be the set of all open neighborhoods of $0$ in $\overline{\crazyadele}_E$.
	Since $\Omega\times \nat$ is countable, it is sufficient to identify a map $\phi:\Omega\times \nat\to \mathscr B$ such that $\phi(\Omega\times \nat)$ is a base of neighborhoods of $0$.
	For each $(S,\alpha)\in \Omega \times \nat$ and $y\in Y_E$ set
	\begin{equation*}
		\phi_y(S,n) = \begin{cases} B_y(0,1/n) & \mbox{if } y \mbox{ divides a place in } S \\
								\mathcal O_y &\mbox{if } y \mbox{ does not divide a place in } S
					\end{cases}
	\end{equation*}
	and define
	\begin{equation*}
		\phi(S,n) = \prod_{y\in Y_E} \phi_y(S,n).
	\end{equation*}
	We know that that $\phi(S,n)$ is a well-defined map to $\mathscr B$.   
	
	Now assume that $U = \prod_{y\in Y_E} U_y\in \mathscr B$ so there exists a compact set $Z\subseteq Y_E$ such that $U_y = \mathcal O_y$ for all $y\in Y_E\setminus Z$.  
	In particular, there exists $S\in \Omega$ such that $Z\subseteq \bigcup_{v\in S} Y(E/K,v)$.  For each $v\in S$, Lemma \ref{OpenTube} ensures that
	there exists $\varepsilon_v >0$ such that $B_{r_v}(0,\varepsilon_v)\subseteq \lambda_v(y,r_v)(U_y)$ for all $y\in Y(E/K,v)$.  If we let $\varepsilon =\min_{v\in S}\{\varepsilon_v\}$ then certainly 
	$B_{r_v}(0,\varepsilon)\subseteq \lambda_v(y,r_v)( U_y)$ for all $y$ dividing a place in $v\in S$.  Now choose $n\in \nat$ such that $1/n \leq \varepsilon$ and it follows that 
	$B_{r_v}(0,1/n) \subseteq \lambda_v(y,r_v)(U_y)$ for all $y$ dividing a place in $v\in S$.  $\lambda_v(y,r_v)$ is an isometric isomorphism so we get $B_y(0,1/n)\subseteq U_y$ for 
	all $y$ dividing a place in $S$, and hence $\phi(S,n) \subseteq U$.
	
	{\bf Proof that $\overline{\crazyadele}_E$ is complete:} We assume that $\mathfrak a_n = (a_{n,y})_{y\in Y_E}$ is a Cauchy sequence in $\overline{\crazyadele}_E$.
	By setting $B = \prod_{y\in Y_E} \left\{ \alpha\in E_y:|\alpha|_y \leq 1\right\}$, there must exist $N_0\in \nat$ such that 
	$\mathfrak a_n - \mathfrak a_m\in B$ for all $m,n\geq N_0$.  This means that
	\begin{equation*}
		|a_{n,y} - a_{m,y}|_y \leq 1\quad\mbox{for all } m,n\geq N_0\mbox{ and all } y\in Y_E.
	\end{equation*}
	Additionally, there exists a compact set $Z\subseteq Y_E$ containing all archimedean places of $E$ such that $|a_{N_0,y}|_y \leq 1$ for all $y\in Y_E\setminus Z$.
	If $n\geq N_0$ and $y\in Y_E\setminus Z$ then 
	\begin{equation*}
		|a_{n,y}|_y = |a_{n,y} - a_{N_0,y} + a_{N_0,y}|_y \leq \max\{|a_{n,y} - a_{N_0,y}|_y,|a_{N_0,y}|_y\} \leq 1,
	\end{equation*}
	and we have shown that
	\begin{equation} \label{IntegersOutsideZ}
		a_{n,y} \in \mathcal O_y\quad\mbox{for all } n\geq N_0\mbox{ and all } y\in Y_E\setminus Z.
	\end{equation}
	By definition of $\overline{\crazyadele}_E$, the maps $f_{n,v}:Y(E/K,v)\to E_{r_v}$ given by $f_{n,v}(y) = \lambda_v(y,r_v)(a_{n,y})$ are continuous.  
	
	We claim that $f_{n,v}$ converges uniformly for each $v$.  To see this, fix a place $v\in Y_K$, let $\varepsilon >0$, and define
	\begin{equation*}
		U_y = \begin{cases} B_y(0,\varepsilon) & \mbox{if } y\mid v\mbox{ or } y\mid\infty \\ \mathcal O_y & \mbox{if } y\nmid v\mbox{ and } y\nmid\infty. \end{cases}
	\end{equation*}
	We know that $\prod_{y\in Y_E} U_y$ is an open neighborhood of $0$ in $\overline{\crazyadele}_E$ so there exists $M_v\in \nat$ such that
	$|a_{n,y} - a_{m,y}|_y < \varepsilon$ for all $m,n\geq M_v$ and all $y\in Y(E/K,v)$.  Thus, $f_{n,v}$ defines a uniformly Cauchy sequence on $Y(E/K,v)$, and since $E_{r_v}$ is complete, 
	it must converge uniformly to a continuous function $f_v:Y(E/K,v)\to E_{r_v}$.  
	
	For each place $y\in Y_E$ we may assume $v\in Y_K$ is such that $y\mid v$.  Now set $b_y = \lambda_v(r_v,y)(f_v(y))$ and $\mathfrak b = (b_y)_{y\in Y_E}$
	so that $\mathfrak b$ must satisfy \ref{Continuous}.  If $y\in Y_E\setminus Z$ and $n\geq N_0$ then we recall from \eqref{IntegersOutsideZ} that 
	$a_{n,y}\in \mathcal O_y$.  But $a_{n,y}$ converges to $b_y$ in $E_y$ we must have $b_y \in  \mathcal O_y$ as well, and it follows that $\mathfrak b$ satisfies \ref{Compact}.
	It remains to prove that $\mathfrak a_n$ converges to $\mathfrak b$ in $\overline{\crazyadele}_E$.
	
	Let $V = \prod_{y\in Y_E} V_y$ be an open neighborhood of $\mathfrak b$ in $\overline{\crazyadele}_E$.  By \ref{Integers}, there must exist a compact set $Z'$ such that $V_y = \mathcal O_y$ for all
	$y\in Y_E\setminus Z'$.  Certainly $Z\cup Z'$ is compact, so we may select a finite set $S$ of places of $K$ such that $Z\cup Z' \subseteq \bigcup_{v\in S} Y(E/K,v)$.
	For each $y$ not dividing a place in $S$, then we must have $y\in Y_E\setminus Z$, and \eqref{IntegersOutsideZ} implies that $a_{n,y}\in \mathcal O_y$ for all $n\geq N_0$.
	But also $y\in Y_E\setminus Z'$, so we know that $\mathcal O_y = V_y$ and we have found that
	\begin{equation*}
		a_{n,y} \in V_y \quad\mbox{for all } n\geq N_0\mbox{ and all } y\mbox{ not dividing a place in } S.
	\end{equation*}
	Now assume $y$ is a place of $E$ dividing a place $v\in S$.  We apply Lemma \ref{OpenTube} to obtain $\varepsilon_v > 0$ such that 
	$B(f_v(y),\varepsilon_v) \subseteq \lambda(y,r_v)(V_y)$.  Since $f_{n,v}$ converges uniformly to $f_v$ for each $v\in S$, we may select $N_v\in \nat$ such that
	$ f_{n,v}(y) \in B(f_v(y),\varepsilon_v)$ for all $n\geq N_v$ and all $y\in Y(E/K,v)$.  Since $\lambda_v(y,r_v)$ is an isometric isomorphism, we conclude that
	\begin{equation*}
		a_{n,y} \in V_y\quad\mbox{for all } n\geq N_v\mbox{ and all } y\mbox{ dividing a place } v\in S.
	\end{equation*}
	By setting $N =\max\{N_0,\max\{N_v:v\in S\}\}$ we obtain that $\mathfrak a_n\in V$ for all $n\geq N$.
\end{proof}

\section{Proof of Theorem \ref{Main}\ref{CommutativeDiagramMain}} \label{CommutativeDiagramSection}

We continue to assume that $E/F$ is Galois and $K\in \I_E$.  For each point $\mathfrak a = (a_v)_{v\in Y_K} \in \mathbb A_K$ there exists a
unique point $\mathfrak b = (b_y)_{y\in Y_E}\in \mathbb \prod_{y\in Y_E} E_y$ such that $b_y = a_v$ for all $y\in Y(E/K,v)$.  If we select $\lambda_v\in T(E/K,v)$ and
$r_v\in Y(E/K,v)$, then $y\mapsto \lambda_v(y,r_v)(b_y)$ is constant on $Y(E/K,v)$ and its values belong to $\mathcal O_y$ except on a compact subset of $Y_E$.  These observations imply that 
$\mathfrak b\in \overline{\crazyadele}_E$ and we obtain a map $\con_{E/K}:\mathbb A_K\to \overline{\crazyadele}_E$ which is defined so that $\con_{E/K}(\mathfrak a) = \mathfrak b$.
If $L\in \I_E$ is such that $K\subseteq L\subseteq E$ then we have
\begin{equation} \label{ConormCompose}
	\con_{E/L}\circ \con_{L/K} = \con_{E/K}.
\end{equation}
When $E/K$ is a finite extension, then our definition of conorm agrees with the definition provided in the introduction.  
In cases where $E$ is clear from context, we shall often simply write $\crazyadele_K = \con_{E/K}(\mathbb A_K)$.  It follows from \eqref{ConormCompose} that $\crazyadele_K \subseteq \crazyadele_L$
whenever $K\subseteq L$.  We obtain the following theorem which resolves Theorem \ref{Main}\ref{CommutativeDiagramMain}.

\begin{thm} \label{CommutativeDiagramMain2}
	If $E/F$ is a Galois extension and $K\in \I_E$ then $\con_{E/K}$ defines a topological ring isomorphism from $\mathbb A_K$ to $\crazyadele_K$.
	Moreover, the diagram
	\begin{center}
	\begin{tikzcd}[row sep=huge, column sep = huge]
		\mathbb A_L \arrow[r,bend left=10,"\con_{E/L}"]& \crazyadele_L \arrow[l,bend left=10,"\con_{E/L}^{-1}"] \\
		 \mathbb A_K \arrow[u,"\con_{L/K}"] \arrow[r,bend left=10,"\con_{E/K}"] &  \crazyadele_K \arrow[l,bend left=10,"\con_{E/K}^{-1}"] \arrow[u,swap,"inclusion"]
	\end{tikzcd}
	\end{center}
	commutes for all $K,L\in \I_E$ with $K\subseteq L$.
\end{thm}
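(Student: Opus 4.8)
The plan is to show that $\con_{E/K}\colon\mathbb A_K\to\mathbb A_K'$ is a homeomorphism; it is manifestly a ring homomorphism (all operations are copied coordinatewise), it is surjective onto $\mathbb A_K'$ by definition of $\mathbb A_K'$, and it is injective because each fiber $Y(E/K,v)$ is non-empty, so the commutativity of the diagram will then follow formally from \eqref{ConormCompose}. Fix $K\in\I_E$, and for each $v\in Y_K$ fix $\lambda_v\in T(E/K,v)$ and $r_v\in Y(E/K,v)$. Throughout I will use the observation (already recorded just before the statement) that for $\mathfrak a=(a_v)_{v\in Y_K}\in\mathbb A_K$ with $\mathfrak b=\con_{E/K}(\mathfrak a)=(b_y)_{y\in Y_E}$, the map $y\mapsto\lambda_v(y,r_v)(b_y)$ is constant on $Y(E/K,v)$ with value $\lambda_v(r_v,r_v)(b_{r_v})=a_v$ by \ref{IdentityMain}; combined with the fact that each $\lambda_v(y,r_v)$ is an isometric isomorphism $E_y\to E_{r_v}$, this gives $\lambda_v(y,r_v)(\mathcal O_y)=\mathcal O_{r_v}$ and $\lambda_v(y,r_v)\big(B_y(a_v,\delta)\big)=B_{r_v}(a_v,\delta)$ for every $\delta>0$ and every $y\in Y(E/K,v)$.

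\emph{Continuity of $\con_{E/K}$.} Let $U=\prod_{y\in Y_E}U_y$ be a basic open set of $\mathbb A_E$ and let $\mathfrak a\in\con_{E/K}^{-1}(U)$; it suffices to find a basic open box of $\mathbb A_K$ containing $\mathfrak a$ and mapped into $U$. By \ref{Integers} there is a compact $Z\subseteq Y_E$ with $U_y=\mathcal O_y$ off $Z$, and by compactness we may pick a finite $S\subseteq Y_K$ containing all Archimedean places with $Z\subseteq\bigcup_{v\in S}Y(E/K,v)$. For each $v\in S$, property \ref{Open} makes $J(K,\lambda_v,r_v;U)$ open in $Y(E/K,v)\times E_{r_v}$, so Lemma \ref{OpenTube}, applied to the continuous (constant) map $y\mapsto\lambda_v(y,r_v)(b_y)=a_v$ with $\Gamma_y=\lambda_v(y,r_v)(U_y)$, yields $\varepsilon_v>0$ with $B_{r_v}(a_v,\varepsilon_v)\subseteq\lambda_v(y,r_v)(U_y)$, hence $B_y(a_v,\varepsilon_v)\subseteq U_y$ for all $y\mid v$. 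Setting $W_v=B_v(a_v,\varepsilon_v)$ for $v\in S$ and $W_v=\mathcal O_v$ otherwise, one checks that $W=\prod_{v\in Y_K}W_v$ is a basic open neighborhood of $\mathfrak a$ in $\mathbb A_K$ (note $a_v\in\mathcal O_v$ for $v\notin S$, since $b_y=a_v\in U_y=\mathcal O_y$) and that $\con_{E/K}(W)\subseteq U$.

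\emph{Openness of $\con_{E/K}$ onto $\mathbb A_K'$.} Conversely, let $W=\prod_{v\in Y_K}W_v$ be a basic open box of $\mathbb A_K$, with $W_v=\mathcal O_v$ off a finite set $S$ containing all Archimedean places, and let $\mathfrak a\in W$; choose $\delta_v>0$ with $B_v(a_v,\delta_v)\subseteq W_v$ for $v\in S$. Define $V=\prod_{y\in Y_E}V_y$ by $V_y=B_y(a_v,\delta_v)$ when $y\mid v\in S$ and $V_y=\mathcal O_y$ otherwise. Condition \ref{Integers} is immediate; for \ref{Open}, the observation of the first paragraph gives $J(K,\lambda_v,r_v;V)=Y(E/K,v)\times B_{r_v}(a_v,\delta_v)$ for $v\in S$ and $J(K,\lambda_v,r_v;V)=Y(E/K,v)\times\mathcal O_{r_v}$ for $v\notin S$, both open, so $V$ is open in $\mathbb A_E$. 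Moreover $\con_{E/K}(\mathfrak a)\in V$, and if $\mathfrak b=\con_{E/K}(\mathfrak a')\in V\cap\mathbb A_K'$ then $b_y=a'_v$ for $y\mid v$, and $\mathfrak b\in V$ forces $a'_v\in B_v(a_v,\delta_v)\subseteq W_v$ for $v\in S$ and $a'_v\in\mathcal O_v=W_v$ for $v\notin S$; hence $\mathfrak a'\in W$ and $\mathfrak b\in\con_{E/K}(W)$. Thus $V\cap\mathbb A_K'$ is a neighborhood of $\con_{E/K}(\mathfrak a)$ inside $\con_{E/K}(W)$, so $\con_{E/K}(W)$ is open in $\mathbb A_K'$. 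This completes the proof that $\con_{E/K}$ is a topological ring isomorphism onto $\mathbb A_K'$.

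\emph{The diagram.} Finally, \eqref{ConormCompose} gives $\mathbb A_K'=\con_{E/L}\big(\con_{L/K}(\mathbb A_K)\big)\subseteq\con_{E/L}(\mathbb A_L)=\mathbb A_L'$ and $\con_{E/L}\circ\con_{L/K}=\con_{E/K}$ as maps $\mathbb A_K\to\mathbb A_L'$; since the horizontal arrows of the diagram are the mutually inverse homeomorphisms just analyzed, every triangle and square in it commutes. The part I expect to be the main obstacle is lining up the two homeomorphism directions: one must pass carefully between compact subsets of $Y_E$ and finite subsets of $Y_K$ so that Lemma \ref{OpenTube} can be invoked uniformly over each fiber $Y(E/K,v)$, and one must be slightly careful with the (easy but fiddly) point that $\lambda_v(y,r_v)$ fixes $K_v$ pointwise, which is what makes the $v$-adic transition diagrams transport balls centered at $a_v$ and the rings $\mathcal O_y$ as claimed.
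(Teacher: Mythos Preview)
Your proof is correct and follows essentially the same approach as the paper: both arguments use Lemma \ref{OpenTube} for continuity and the fact that each $\lambda_v(y,r_v)\in\gal(E/K)$ fixes $K_v$ pointwise (so that $J(K,\lambda_v,r_v;V)$ becomes a cylinder $Y(E/K,v)\times B_{r_v}(a_v,\delta_v)$) for openness, with the diagram handled via \eqref{ConormCompose}. The only cosmetic difference is that in the openness step the paper assumes without loss of generality that the basic open set already has $\Gamma_v=B_v(c_v,\varepsilon_v)$ for $v\in S$ and then identifies $\con_{E/K}(\Gamma)$ exactly with $U\cap\mathbb A_K'$, whereas you work pointwise by nesting a ball $B_v(a_v,\delta_v)\subseteq W_v$ around each $\mathfrak a\in W$; both are equally valid.
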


\begin{proof}
	The commutativity of the diagram follows from \eqref{ConormCompose} and it is trivial to verify that $\con_{E/K}$ is an injective ring homomorphism.  It remains to show that $\con_{E/K}$ is continuous
	and open as a map onto $\crazyadele_K$.  For each place $v$ of $K$, we assume that $\lambda_v\in T(E/K,v)$ and $r_v\in Y(E/K,v)$.
	
	We now prove that $\con_{E/K}$ is continuous.  Assume that $U = \prod_{y\in Y_E} U_y$ is a basis element of $\overline{\crazyadele}_E$ and $\mathfrak a = (a_v)_{v\in Y_K} \in \con_{E/K}^{-1}(U)$.
	For every place $v\in Y_K$ and every place $y\in Y(E/K,v)$, the definition of conorm implies that $a_v\in U_y$, so we conclude that $\lambda_v(y,r_v)(a_v) \subseteq \lambda_v(y,r_v)(U_y)$.
	Using Lemma \ref{OpenTube}, there exists $\varepsilon_v > 0$ such that $B_{r_v}(\lambda_v(y,r_v)(a_v),\varepsilon_v) \subseteq \lambda_v(y,r_v)(U_y)$, and therefore,
	\begin{equation*}
		B_v(a_v,\varepsilon_v) \subseteq B_y(a_v,\varepsilon_v)\subseteq U_y\quad\mbox{for all } y\in Y(E/K,v).
	\end{equation*}
 	We know there exists a finite set $S$ of places of $K$ such that $U_y = \mathcal O_y$ for all $y$ not dividing a place in $S$.  Define
	\begin{equation*}
		\Gamma_v = \begin{cases} B_v(a_v,\varepsilon_v) & \mbox{if } v\in S \\
								\mathcal O_v &\mbox{if } v\not\in S
					\end{cases}
	\end{equation*}
	and $\Gamma = \prod_{v\in Y_K} \Gamma_v$ so that $\Gamma$ is an open set in $\mathbb A_K$ and
	\begin{equation} \label{FinalGamma}
		\mathfrak a \in \Gamma \subseteq \con_{E/K}^{-1}(U)
	\end{equation}
	establishing that $\con_{E/K}$ is continuous.
	
	Now suppose that $\Gamma = \prod_{y\in Y_E} \Gamma_y$ is open in $\mathbb A_K$.  There exists a finite set $S$ of places of $K$ such that $\Gamma_v = \mathcal O_v$ for all $v\in Y_K\setminus S$.
	For $v\in S$, we may assume without loss of generality that
	\begin{equation*}
		\Gamma_v = B_v(c_v,\varepsilon_v)\quad\mbox{for some } c_v\in K_v,\ \varepsilon_v>0.
	\end{equation*} 
	Indeed, sets of this form are a basis for the topology on $\mathbb A_K$. Now define $U= \prod_{y\in Y_E}U_y$ where
	\begin{equation*}
		U_y = \begin{cases} B_{y}(c_v,\varepsilon_v) & \mbox{if } $y$ \mbox{ divides a place } v\in S \\
							\mathcal O_y & \mbox{if } $y$ \mbox{ does not divide a place in } S.
			\end{cases}
	\end{equation*}
	Since $\lambda(y,r_v)(c_v) = c_v$ we know that
	\begin{equation*}
		J(K,\lambda_v,r_v; U) = \bigcup_{y\in Y(E/K,v)} ( \{y\}\times B_{r_v}(c_v,\varepsilon_v)) = Y(E/K,v) \times B_{r_v}(c_v,\varepsilon_v)
	\end{equation*}
	for all $v\in S$.  Certainly this set is open in $Y(E/K,v)\times E_{r_v}$, so it follows that $U$ satisfies properties \ref{Integers} and \ref{Open}.  Additionally, we have that
	$\con_{E/K}(\Gamma) = U \cap \crazyadele_K$ verifying that $\con_{E/K}(\Gamma)$ is open in $\crazyadele_K$.
\end{proof}

\section{Proof of Theorem \ref{Main}\ref{ClosureMain}} \label{ClosureSection}

The proof of Theorem \ref{Main}\ref{ClosureMain} requires two preliminary technical lemmas.

\begin{lem} \label{DisjointFiniteCover}
	Assume $E/F$ is a Galois extension, $K\in \I_E$, and $v\in Y_K$.  For each $y\in Y(E/K,v)$, let $A_y$ be an open neighborhood of $y$.
	Then there exists $L\in \I_E$ with $K\subseteq L$  satisfying the following property: For all $w\in Y(L/K,v)$ there exists $y\in Y(E/K,v)$ such that $Y(E/L,w)\subseteq A_y$.
\end{lem}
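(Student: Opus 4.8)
The plan is to exploit the compactness of $Y(E/K,v)$ together with the fact, recorded earlier from \cite[\S 2]{AllcockVaaler}, that the sets $Y(E/M,u)$ with $M\in\I_E$ and $u\in Y_M$ form a basis for the topology on $Y_E$, with each such set non-empty.

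First, for each $y\in Y(E/K,v)$ I would use that $A_y$ is an open neighborhood of $y$ to choose a basic open set $Y(E/M_y,u_y)$ with $y\in Y(E/M_y,u_y)\subseteq A_y$, where $M_y\in\I_E$ and $u_y\in Y_{M_y}$; no relation between $M_y$ and $K$ is needed. The collection $\{Y(E/M_y,u_y):y\in Y(E/K,v)\}$ is then an open cover of the compact set $Y(E/K,v)$, so there is a finite subcover indexed by $y_1,\dots,y_n$. I would set $L=K\,M_{y_1}\cdots M_{y_n}$. Since a compositum of finite Galois extensions of $F$ inside $E$ is again finite Galois over $F$, we get $L\in\I_E$, and clearly $K\subseteq L$ and $M_{y_i}\subseteq L$ for every $i$.

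To check that $L$ has the required property, fix $w\in Y(L/K,v)$ and pick a place $z\in Y(E/L,w)$, which is possible because $Y(E/L,w)$ is a non-empty basis element. Since the restriction of $z$ to $L$ is $w$ and the restriction of $w$ to $K$ is $v$, the restriction of $z$ to $K$ is $v$, so $z\in Y(E/K,v)$ and hence $z\in Y(E/M_{y_i},u_{y_i})$ for some $i$. In particular, the restriction of $z$ to $M_{y_i}$ equals $u_{y_i}$; as $M_{y_i}\subseteq L$, this restriction factors through $L$, so the restriction of $w$ to $M_{y_i}$ is also $u_{y_i}$. Consequently, for an arbitrary $z'\in Y(E/L,w)$ the restriction of $z'$ to $M_{y_i}$ coincides with the restriction of $w$ to $M_{y_i}$, namely $u_{y_i}$, so $z'\in Y(E/M_{y_i},u_{y_i})$. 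This yields $Y(E/L,w)\subseteq Y(E/M_{y_i},u_{y_i})\subseteq A_{y_i}$, which is exactly what is claimed.

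The genuinely routine points — that the sets $Y(E/M,u)$ form a basis, that $Y(E/L,w)\neq\varnothing$, and that compositums stay in $\I_E$ — are all available from the excerpt or from standard field theory. The only step requiring any care is the compatibility of the ``divides'' relation along the tower $M_{y_i}\subseteq L\subseteq E$: it is precisely transitivity of restriction of places that forces a single basic set $Y(E/M_{y_i},u_{y_i})$ to contain the whole fiber $Y(E/L,w)$, not merely the chosen point $z$. I do not anticipate any real obstacle beyond keeping these restrictions straight.
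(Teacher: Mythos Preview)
Your argument is correct and is essentially the same as the paper's: both refine the cover $\{A_y\}$ to basic open sets $Y(E/M_y,u_y)$, extract a finite subcover by compactness of $Y(E/K,v)$, take $L$ to be the compositum of $K$ with the finitely many $M_{y_i}$, and then use transitivity of place restriction through the tower $M_{y_i}\subseteq L\subseteq E$ to show each fiber $Y(E/L,w)$ lands in a single $Y(E/M_{y_i},u_{y_i})\subseteq A_{y_i}$. Your version is slightly more explicit about including $K$ in the compositum and about the restriction argument, but the proofs are otherwise identical.
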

\begin{proof}
	For each $y\in Y(E/K,v)$ we may select $K^{(y)}\in \I_E$ and a place $v^{(y)}$ of $K^{(y)}$ such that $y\in Y(E/K^{(y)},v^{(y)}) \subseteq A_y$.
	Therefore, $\{ Y(E/K^{(y)},v^{(y)}): y\in Y(E/K,v)\}$ is an open cover of $Y(E/K,v)$, and by compactness, there exists a finite set $S\subseteq Y(E/K,v)$ such that
	$\{ Y(E/K^{(y)},v^{(y)}): y\in S\}$ is a cover of $Y(E/K,v)$.  Since $E/F$ is Galois, there exists $L\in \I_E$ such that $K^{(y)} \subseteq L$ for all $y\in S$.
	
	If $w\in Y(L/K,v)$ then there exists $y\in S$ such that
	\begin{equation*}
		Y(E/L,w) \cap Y(E/K^{(y)},v^{(y)}) \ne \emptyset.
	\end{equation*}
	We will show that $Y(E/L,w) \subseteq Y(E/K^{(y)},v^{(y)})$.  Assume that $x\in Y(E/L,w) \cap Y(E/K^{(y)},v^{(y)})$.  If $z\in Y(E/L,w)$ then for every $\alpha\in K^{(y)} \subseteq L$ we have
	\begin{equation*}
		|\alpha|_z = |\alpha|_w = |\alpha|_x = |\alpha|_{v^{(y)}}.
	\end{equation*}
	This implies that $z\mid v^{(y)}$ so that $z\in Y(E/K^{(y)},v^{(y)})$.  We now shown that $Y(E/L,w) \subseteq Y(E/K^{(y)},v^{(y)}) \subseteq A_y$ as required.
\end{proof}

\begin{lem} \label{FiniteValues}
	Suppose that $\mathfrak b = (b_y)_{y\in Y_E} \in \overline{\crazyadele}_E$ satisfies the following three properties:
	\begin{enumerate}[label={(\roman*)}]
		\item\label{ZeroCompact} There exists a compact subset $Z\subseteq Y_E$ such that $b_y = 0$ for all $y\in Y_E\setminus Z$
		\item\label{InE} $b_y\in E$ for all $y\in Y_E$
		\item\label{Finite} $\{b_y: y\in Y_E\}$ is finite.
	\end{enumerate}
	Then $\mathfrak b\in \crazyadele_E$.
\end{lem}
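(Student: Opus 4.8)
The plan is to produce, for some $L\in\I_E$, a point $\mathfrak a\in\mathbb A_L$ with $\con_{E/L}(\mathfrak a)=\mathfrak b$, which is exactly what it means for $\mathfrak b$ to lie in $\mathbb A_L'\subseteq\mathbb A_E'$. Two things must be arranged: every coordinate $b_y$ should take values in $L$, so that $(b_y)$ is the conorm of a well-defined point of $\prod_{w\in Y_L}L_w$; and the map $y\mapsto b_y$ should be constant on each fibre $Y(E/L,w)$. For the first point, I would use \ref{InE} and \ref{Finite}: the finite set $\{b_y:y\in Y_E\}$ is contained in a finite subextension of $E/F$, whose Galois closure over $F$ is still contained in $E$ because $E/F$ is Galois, and hence is a field $K_0\in\I_E$ with $b_y\in K_0$ for every $y$.

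The key step is to show that $y\mapsto b_y$ is locally constant on each set $Y(E/K_0,v)$ with $v\in Y_{K_0}$. Since $\mathfrak b\in\mathbb A_E$, property \ref{Continuous} holds, and by Theorem \ref{MainTransfer} it may be read off using the field $K_0$ together with any $\lambda_v\in T(E/K_0,v)$ and $r_v\in Y(E/K_0,v)$: the map $y\mapsto\lambda_v(y,r_v)(b_y)$ is continuous from $Y(E/K_0,v)$ to $E_{r_v}$. But $\lambda_v(y,r_v)\in\gal(E/K_0)$ fixes $K_0$ pointwise and $b_y\in K_0$, so this map is simply $y\mapsto b_y$; its image lies in the finite set $\{b_y:y\in Y_E\}$, which is discrete in the Hausdorff space $E_{r_v}$. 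Consequently each level set $\{y\in Y(E/K_0,v):b_y=c\}$ is open in $Y(E/K_0,v)$, and therefore open in $Y_E$.

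Next I would pass to a larger field in order to merge the fibres. By \ref{ZeroCompact} the set $Z$ is compact, and $\{Y(E/K_0,v):v\in Y_{K_0}\}$ is a partition of $Y_E$ into open sets, so there is a finite $S\subseteq Y_{K_0}$ with $Z\subseteq\bigcup_{v\in S}Y(E/K_0,v)$; when $v\notin S$ the set $Y(E/K_0,v)$ is disjoint from $Z$ and so $b_y=0$ for all $y\in Y(E/K_0,v)$. For each $v\in S$, apply Lemma \ref{DisjointFiniteCover} with $A_y$ the (open) level set of $b$ through $y$ inside $Y(E/K_0,v)$; this yields $L_v\in\I_E$ with $K_0\subseteq L_v$ such that $b$ is constant on $Y(E/L_v,w)$ for every place $w$ of $L_v$ dividing $v$. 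Let $L$ be the compositum of $K_0$ and the finitely many fields $L_v$, which again lies in $\I_E$. Then $y\mapsto b_y$ is constant on every fibre $Y(E/L,w')$: writing $v$ for the place of $K_0$ below $w'$, if $v\in S$ this follows from $Y(E/L,w')\subseteq Y(E/L_v,w)$ for the place $w$ of $L_v$ below $w'$, and if $v\notin S$ then $b$ vanishes on all of $Y(E/K_0,v)\supseteq Y(E/L,w')$.

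Finally, let $a_{w'}$ be the common value of $b$ on $Y(E/L,w')$; then $a_{w'}\in K_0\subseteq L_{w'}$, so $\mathfrak a=(a_{w'})_{w'\in Y_L}$ defines a point of $\prod_{w'\in Y_L}L_{w'}$. Only finitely many fibres $Y(E/L,w')$ meet the compact set $Z$, and for every other non-Archimedean $w'$ we have $a_{w'}=0\in\mathcal O_{w'}$, so $\mathfrak a\in\mathbb A_L$; moreover $\con_{E/L}(\mathfrak a)=\mathfrak b$ by construction, whence $\mathfrak b\in\mathbb A_L'\subseteq\mathbb A_E'$. I expect the main obstacle to be the local-constancy step combined with the bookkeeping around Lemma \ref{DisjointFiniteCover}: one has to notice that the transition diagrams act trivially on the $E$-valued, finitely-valued coordinates $b_y$, and then handle all of the finitely many relevant places of $K_0$ simultaneously before passing to a single compositum $L$.
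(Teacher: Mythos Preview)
Your proposal is correct and follows essentially the same route as the paper's proof: choose $K_0\in\I_E$ containing all values $b_y$, use the continuity condition (with transition diagrams acting trivially on $K_0$) to see that $y\mapsto b_y$ has open level sets on each $Y(E/K_0,v)$, apply Lemma~\ref{DisjointFiniteCover} over the finitely many relevant places, and pass to the compositum $L$. The only cosmetic differences are that you invoke Theorem~\ref{MainTransfer} explicitly (the paper relies on the stated independence of the definition of $\mathbb A_E$) and you include $K_0$ in the compositum, which is redundant since each $L_v$ already contains it.
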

\begin{proof}
	Using conditions \ref{InE} and \ref{Finite}, we may assume that $K\in \I_E$ is such that $b_y\in K$ for all $y\in Y_E$.  For each place $v\in Y_K$, let $\lambda_v\in T(E/K,v)$ and $r_v\in Y(E/K,v)$.  
	Further define $f_v: Y(E/K,v)\to K_v$ by $f_v(y) = b_y$, and note that since $\lambda_v(y,r_v)\in \gal(E/K)$, we get
	\begin{equation*}
		f_v(y) = \lambda_v(y,r_v)(b_y).
	\end{equation*}
	We have assumed that $\mathfrak b\in \overline{\crazyadele}_E$ so that $f_v$ is continuous.  However, $\{f_v(y):Y(E/K,v)\}$ is a finite subset
	of the metric space $E_{r_v}$, and therefore, this set is discrete.  In particular, $f_v^{-1}(f_v(y))$ is open in $Y(E/K,v)$ for all $y\in Y(E/K,v)$.
	We may now apply Lemma \ref{DisjointFiniteCover} with $A_y = f_v^{-1}(f_v(y))$.  We obtain $L^{(v)}\in \I_E$ with $K\subseteq L^{(v)}$ such that for all $w\in Y(L^{(v)}/K,v)$ 
	there exists $y_w\in Y(E/K,v)$ such that $Y(E/L^{(v)},w)\subseteq f_v^{-1}(f_v(y_w))$.  It follows that $f_v$ is constant on $Y(E/L^{(v)},w)$ for every place $w\in Y(L^{(v)}/K,v)$.
	
	According to \ref{ZeroCompact}, we may assume that $S$ is a finite set of places of $K$ such that $b_y = 0$ for all $y$ not dividing a place in $S$.
	Now let $L$ be the compositum of the fields $L^{(v)}$, where $v\in S$, so that $L\in \I_E$.  If $u$ is a place of $L$ then $u$ divides some place $v$ of $K$.
	In this case, $Y(E/L,u) \subseteq Y(E/L^{(v)},w)$ for some $w\in Y(L^{(v)}/K,v)$.  If we let $f_u$ be the restriction of $f_v$ to $Y(E/L,u)$, then $f_u$ must be constant and its value must belong to $K$.
	Now define
	\begin{equation*}
		a_u = \begin{cases} f_u(y) &\mbox{if } u\mbox{ divides a place in } S \mbox{ and } y\in Y(E/L,u) \\
			0 & \mbox{if } u\mbox{ does not divide a place in } S
			\end{cases}
	\end{equation*}
	and $\mathfrak a = (a_u)_{u\in Y_L}$ so that $\mathfrak b = \con_{E/L}(\mathfrak a)$.
\end{proof}

Equipped with Lemmas \ref{DisjointFiniteCover} and \ref{FiniteValues} we may proceed with the proof of Theorem \ref{Main}\ref{ClosureMain}.

\begin{proof}[Proof of Theorem \ref{Main}\ref{ClosureMain}]
	Suppose $\mathfrak a = (a_y)_{y\in Y_E}\in \overline{\crazyadele}_E$ and $U = \prod_{y\in Y_E} U_y$ is an open neighborhood of $\mathfrak a$.  We may assume there exists a finite set of places
	$S$ of $F$ containing all archimedean places such that $U_y= \mathcal O_y$ for all $y$ not dividing a place in $S$.
	
	We will find a point $\mathfrak b = (b_y)_{y\in Y_E} \in U$ which satisfies the assumptions of Lemma \ref{FiniteValues}.
	For each $p\in S$, let $\lambda_p\in T(E/F,p)$ and $r_p\in Y(E/F,p)$.  Let $f_p:Y(F,p)\to E_{r_p}$ be given by
	$f_p(y) = \lambda_p(y,r_p)(a_y)$ and note that $f_p$ must be continuous by the definition of $\overline{\crazyadele}_E$.  Then according to Lemma \ref{OpenTube}, there exists $\varepsilon_p >0$ such that
	\begin{equation*}
		B_{r_p}(f_p(y),\varepsilon_p) \subseteq \lambda_p(y,r_p)(U_y)
	\end{equation*}
	for all $y\in Y(E/F,p)$.  Additionally, for each $x\in Y(E/F,p)$, we may select an open set $A_x\subseteq Y(E/F,p)$ containing $x$ such that
	\begin{equation} \label{FirstEpsilon}
		|f_p(y) - f_p(x)|_{r_p} < \frac{\varepsilon_p}{2}\quad\mbox{for all } y\in A_x.
	\end{equation}
	We now apply Lemma \ref{DisjointFiniteCover} with $K = F$ and $v=p$ to obtain $L^{(p)}\in I_E$ such that for all $w\in Y(L^{(p)}/F,p)$ there exists $x_w\in Y(E/F,p)$ such that 
	$Y(E/L^{(p)},w)\subseteq A_{x_w}$.  Since $E$ is dense in $E_{r_p}$ we may let $c_w \in E$ be such that $|f_p(x_w) - c_w|_{r_p} < \varepsilon_p/2$.  Then for each $y\in Y(E/L^{(p)},w)$ we get $y\in A_{x_w}$
	and \eqref{FirstEpsilon} yields
	\begin{equation*}
		|f_p(y) - c_w|_{r_p} = |f_p(y) - f_p(x_w) + f_p(x_w) - c_w|_{r_p} \leq |f_p(y) - f_p(x_w)|_{r_p} + |f_p(x_w) - c_w|_{r_p} < \varepsilon_p.
	\end{equation*}
	In other words, 
	\begin{equation} \label{CContain}
		c_w \in B_{r_p}(f_p(y),\varepsilon_p) \subseteq \lambda_p(y,r_p)(U_y)\quad\mbox{for all } y\in Y(E/L^{(p)},w).
	\end{equation}
	For all places $y\in Y_E$ dividing a place in $S$, we may choose $w$ so that $y\in Y(E/L^{(p)},w)$ and define 
	\begin{equation*}
		b_y = \begin{cases}  \lambda_p(r_p,y)(c_w) &\mbox{if } y\mbox{ divides a place }p\in  S \\
			0 & \mbox{if } y\mbox{ does not divide a place in } S.
			\end{cases}
	\end{equation*}
	Then we set $\mathfrak b = (b_y)_{y\in Y_E}$ and note that $\mathfrak b$ satisfies the assumptions of Lemma \ref{FiniteValues}.  Additionally, \eqref{CContain} implies that $b_y\in U_y$ for all $y\in Y_E$
	and the result follows.
\end{proof}

\section{Proof of Theorem \ref{EqualityEquivalence}} \label{EqualityProof}

Before presenting our proof of Theorem \ref{EqualityEquivalence}, we describe an important situation in which $Y(E/F,p)$ has no isolated points.  We provide a preliminary lemma which accomplishes this goal.

\begin{lem} \label{IsolatedPoints}
	Suppose $E/F$ is an infinite Galois extension and $p$ is a place of $F$.  If there exists a place $r\in Y(E/F,p)$ such that $E_r/F_p$ is algebraic then $Y(E/F,p)$ has no isolated points.
\end{lem}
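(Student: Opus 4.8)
The strategy is to argue by contradiction, using the completions $E_y$ to turn this purely topological statement into one about field extensions, and then to invoke the fact that an algebraic extension of a local field cannot be complete unless it is finite.

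Suppose, for contradiction, that some $y_0\in Y(E/F,p)$ is isolated. Since the sets $Y(E/K,v)$ with $K\in\I_E$ and $v\in Y_K$ form a basis for the topology of $Y_E$, there are $K\in\I_E$ and $v\in Y_K$ with $y_0\in Y(E/K,v)=\{y_0\}$, and $v$ divides $p$ because $y_0$ does. I would first show that this forces the local degrees at $v$ to grow without bound. For every $L\in\I_E$ with $K\subseteq L$ we have the disjoint decomposition $Y(E/K,v)=\bigsqcup_{w\in Y(L/K,v)}Y(E/L,w)$ into non-empty sets, so $Y(L/K,v)$ must consist of a single place $w_L$; since $L/K$ is separable, the fundamental identity gives $[L_{w_L}:K_v]=[L:K]$. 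As $E/K$ is infinite, $[L:K]$ is unbounded, and since $L_{w_L}$ is the closure of $L$ inside $E_{y_0}$, the completion $E_{y_0}$ contains finite extensions of $K_v$ of arbitrarily large degree. Because $[K_v:F_p]<\infty$, this shows $E_{y_0}/F_p$ has infinite degree.

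Next I would transfer the hypothesis on $E_r$ to $E_{y_0}$. Applying the transitivity of the Galois action with $K=F$ and $v=p$, the group $\gal(E/F)$ acts transitively on $Y(E/F,p)$, so some $\sigma\in\gal(E/F)$ satisfies $\sigma(r)=y_0$; by the results quoted from \cite{AllcockVaaler}, $\sigma$ extends to an isometric isomorphism $E_r\to E_{y_0}$. Since $\sigma$ fixes $F$ pointwise and is continuous, it fixes the closure of $F$ in $E_r$, which is a copy of $F_p$; hence $\sigma$ is an isomorphism of $F_p$-algebras, and $E_{y_0}/F_p$ is algebraic because $E_r/F_p$ is. Combining the two previous paragraphs, $E_{y_0}$ would be a complete field that is an infinite algebraic extension of the local field $F_p$.

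This is the contradiction, and supplying its justification — that an infinite algebraic extension of a local field is never complete — is the step I expect to require the most care. I would prove it by exhibiting a non-convergent Cauchy sequence: pick a strictly increasing tower $F_p=M_0\subsetneq M_1\subsetneq\cdots$ of finite subextensions and elements $\alpha_i\in M_i\setminus M_{i-1}$ (rescaled to be units), let $\delta_i$ be a Krasner bound for $\alpha_i$ over $M_{i-1}$ (so any $\beta$ with $v(\beta-\alpha_i)>\delta_i$ generates $M_{i-1}(\alpha_i)$ over $M_{i-1}$), and, with $\pi$ a uniformizer of $F_p$, form $\gamma_n=\sum_{i=1}^{n}\pi^{e_i}\alpha_i$ where the exponents $e_1<e_2<\cdots$ are chosen inductively (using the already-determined $\delta_i$) so that $v\bigl(\pi^{-e_{m+1}}\sum_{i>m}\pi^{e_i}\alpha_i-\alpha_{m+1}\bigr)>\delta_{m+1}$ for every $m$. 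The sequence $\gamma_n$ is Cauchy, so if the extension were complete it would converge to a point $\gamma$ lying in some single $M_m$; but then $\pi^{-e_{m+1}}(\gamma-\gamma_m)\in M_m$ would be within Krasner distance $\delta_{m+1}$ of $\alpha_{m+1}\notin M_m$, forcing $\alpha_{m+1}\in M_m$, a contradiction. (One may instead simply cite this fact, e.g.\ via Ax--Sen--Tate or standard valuation theory; in the number field case it is immediate, since an algebraic extension of $F_p$ embeds in the countable field $\overline{F_p}$, whereas a complete metric field with no isolated points is uncountable.) Having reached a contradiction, we conclude that $Y(E/F,p)$ has no isolated points.
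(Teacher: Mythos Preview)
Your argument is correct and follows essentially the same route as the paper's: both use the transitive continuous action of $\gal(E/F)$ on $Y(E/F,p)$ to reduce to a single place, both deduce from an isolated point $\{y_0\}=Y(E/K,v)$ that the completion contains finite subextensions of $K_v$ of unbounded degree via the local degree formula, and both finish by invoking the fact that a complete algebraic extension of a local field must be finite. The only substantive difference is that the paper simply cites this last fact from \cite[Ch.~II, \S4, Ex.~1]{Neukirch} and places the transitivity reduction at the very start (reducing to $r$ rather than transferring algebraicity to $y_0$), whereas you supply your own Krasner-type construction of a non-convergent Cauchy sequence.

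One genuine error to flag, though it does not affect your main line: your parenthetical shortcut for the number-field case is wrong. The field $\overline{F_p}$ is \emph{not} countable --- already $F_p$ itself (e.g.\ $\mathbb Q_p$ or $\mathbb R$) is uncountable --- so the cardinality argument you sketch does not go through. The Krasner construction (or the citation) is genuinely needed.
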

\begin{proof}
	Since $\gal(E/F)$ acts transitively and continuously on $Y(E/F,p)$, it is enough to show that $r$ is not an isolated point.  Since $E_r/F_p$ is an algebraic extension of complete fields, this extension
	must actually be finite (see \cite[Ch. II, \S 4, Ex. 1]{Neukirch}).
	
	Now assume that $r$ is an isolated point.  Hence, there exists $K\in \I_E$ and a place $v$ of $K$ such that $r$ is the only point belonging to $Y(E/K,v)$.
	Since $E/F$ is an infinite extension, we may define a collection of fields $\{K^{(i)}:i\in \nat_0\} \subseteq \I_E$ such that $K^{(0)} = K$ and $K^{(i)} \subsetneq K^{(i+1)}$ for all $i \in \nat_0$.
	This means that
	\begin{equation} \label{Limit}
		\lim_{i\to \infty} [K^{(i)}:K] = \infty.
	\end{equation}
	If $K^{(i)}$ has two distinct places dividing $v$, then those places must extend to two distinct places of $E$ dividing $v$ contradicting our assumption that $Y(E/K,v)$ contains one point.
	Consequently, for each $i\in \nat_0$, there exists a unique place $v_i$ of $K^{(i)}$ such that $v_i\in Y(K^{(i)}/K,v)$.  However, we note the well-known identity
	\begin{equation*}
		[K^{(i)}:K] = \sum_{w\in Y(K^{(i)}/K,v)} [K^{(i)}_w:K_v]
	\end{equation*}
	which simplifies to $[K^{(i)}:K] = [K^{(i)}_{v_i}:K_v]$ in our case.  These observations yield
	\begin{equation*}
		[K^{(i)}:K] \leq [E_r:K_v] \leq [E_r:F_p].
	\end{equation*}
	However, we have noted that $E_r/F_p$ is finite which contradicts \eqref{Limit}.
\end{proof}

\begin{proof}[Proof of Theorem \ref{EqualityEquivalence}]
	In view of Theorem \ref{Main} and Corollary \ref{DirectLimitCompletionMain}, it is sufficient to show that $\crazyadele_E$ has empty 
	interior in $\overline{\crazyadele}_E$.  Since $\overline{\crazyadele}_E$ is a topological ring and $0\in\crazyadele_E$, it is enough to show that $0$ is not in the interior 
	of $\crazyadele_E$.  To this end, we assume that $U$ is an open neighborhood of $0$ and write
	\begin{equation*}
		U = \prod_{y\in Y_E} U_y.
	\end{equation*}
	We must prove that $U$ contains a point in $\overline{\crazyadele}_E$ which does not belong to $\crazyadele_E$.  There exists a finite set $S$ of places of $F$, containing all archimedean places, such that 
	$U_y = \mathcal O_y$ for all places $y\in Y_E$ not dividing a place in $S$.  We now consider two cases:
	
	{\bf Case 1:} Suppose that $E_y/F_q$ is algebraic for every $q\in Y_F$ and every $y\in Y(E/F,q)$.  In this scenario, we fix places $p \in Y_F\setminus S$ and $r\in Y(E/F,p)$.
		We know that $Y(E/F,p)$ is Hausdorff and compact, and therefore, it is regular. In addition, $Y(E/F,p)$ has a countable basis, which means that the Urysohn Metrization Theorem \cite[Ch. 4, \S 34]{Munkres} 
		applies and shows that $Y(E/F,p)$ is a metric space.  Since $Y(E/F,p)$ is totally disconnected, and according to Lemma \ref{IsolatedPoints} has no isolated points, it is homeomorphic to the Cantor set 
		(see \cite[Corollary 30.4]{Willard}).  Since $p$ is nonarchimedean, $\mathcal O_p$ is also homeomorphic to the cantor set for the same reasons, and consequently, there exists a homeomorphism
		$f: Y(E/F,p)\to \mathcal O_p$.  Then we define $\mathfrak a = (a_y)_{y\in Y_E}$ where
		\begin{equation*}
			a_y = \begin{cases} f(y) & \mbox{if } y\mid p \\ 0 & \mbox{if } y\nmid p. \end{cases}
		\end{equation*}
		Clearly $\mathfrak a$ satisfies property \ref{Compact}.  Assuming that $\lambda\in T(E/F,p)$ and $r\in Y(E/F,p)$, we also know that $\lambda(y,r)(a_y) = f(y)$ for all $y\in Y(E/F,p)$.  
		It now follows that $\mathfrak a$ satisfies \ref{Continuous} and belongs to $\overline{\crazyadele}_E$.  Since $U_y = \mathcal O_y$ for all $y\mid p$ and $0\in U_y$ for all other $y\in Y_E$, 
		we deduce that $\mathfrak a\in U$.
		
		On the other hand, if $\mathfrak a\in \crazyadele_E$ then there exists $K\in \I_E$ such that $\mathfrak a \in \con_{E/K}(\mathbb A_K)$.	
		Therefore, $f$ is constant on $Y(E/K,v)$ for all $v\in Y(K/F,p)$.  However, Lemma \ref{IsolatedPoints}
		shows that $Y(E/F,p)$ cannot have isolated points.  In particular, $Y(E/K,v)$ contains more than one point, contradicting our assumption that $f$ is injective.
	
	{\bf Case 2:} Suppose there exist places $p$ of $F$ and $r\in Y(E/F,p)$ such that $E_r/F_p$ is transcendental.  According to Lemma \ref{OpenTube}, there exists $\varepsilon > 0$ such that
		$B_r(0,\varepsilon) \subseteq \lambda(y,r)(U_y)$ for all $y\in Y(E/F,p)$.
		Since $E_r/F_p$ is transcendental, we may choose a point $\alpha\in E_r$ which does not belong to any finite extension of $F_p$.  By possibly multiplying by an appropriate non-zero element of $F$, 
		we may assume without loss of generality that $\alpha\in B_r(0,\varepsilon)$.  Assume that $\lambda$ is a $p$-adic transition diagram and let 
		$\mathfrak a = (a_y)_{y\in Y_E}$ be such that
		\begin{equation*}	
			a_y = \begin{cases} \lambda(r,y)(\alpha) & \mbox{if } y\mid p \\ 0 & \mbox{if } y\nmid p. \end{cases}
		\end{equation*}
		Clearly $\mathfrak a$ satisfies properties \ref{Compact} and \ref{Continuous} so that $\mathfrak a \in \overline{\crazyadele}_E$.  Also, if $y\mid p$ then 
		\begin{equation*}
			a_y = \lambda(r,y)(\alpha)\in \lambda(r,y)(B_r(0,\varepsilon)) \subseteq \lambda(r,y)(\lambda(y,r)(U_y)) = U_y
		\end{equation*}
		which means that $\mathfrak a\in U$.
		
		If $K\in \I_E$ and $v$ is the place of $K$ such that $r\mid v$ then $\alpha\not\in K_v$ since otherwise $\alpha$ would belong to a finite extension of $F_p$.  
		This means that $\mathfrak a$ cannot belong to $\con_{E/K}(\mathbb A_K)$.  As this argument applies to any $K\in \I_E$, we conclude that $\mathfrak a$
		cannot belong to $\crazyadele_E$.

\end{proof}


\begin{thebibliography}{10}

\bibitem{AllcockVaaler}
D.~Allcock and J.~D. Vaaler.
\newblock A {B}anach space determined by the {W}eil height.
\newblock {\em Acta Arith.}, 136(3):279--298, 2009.

\bibitem{Birkhoff}
G.~Birkhoff.
\newblock A note on topological groups.
\newblock {\em Compositio Math.}, 3:427--430, 1936.


\bibitem{CasselsFrohlich}
J.~W.~S. Cassels and A.~Fr\"ohlich, editors.
\newblock {\em Algebraic number theory}. London Mathematical Society, London,
  2010.
\newblock Papers from the conference held at the University of Sussex,
  Brighton, September 1--17, 1965, Including a list of errata.

\bibitem{Checcoli}
S.~Checcoli.
\newblock Fields of algebraic numbers with bounded local degrees and their
  properties.
\newblock {\em Trans. Amer. Math. Soc.}, 365(4):2223--2240, 2013.

\bibitem{CheccoliZannier}
S.~Checcoli and U.~Zannier.
\newblock On fields of algebraic numbers with bounded local degrees.
\newblock {\em C. R. Math. Acad. Sci. Paris}, 349(1-2):11--14, 2011.

\bibitem{Kakutani}
S.~Kakutani.
\newblock \"uber die {M}etrisation der topologischen {G}ruppen.
\newblock {\em Proc. Imp. Acad.}, 12(4):82--84, 1936.

\bibitem{Lang}
S.~Lang.
\newblock {\em Fundamentals of {D}iophantine geometry}.
\newblock Springer-Verlag, New York, 1983.

\bibitem{Munkres}
J.~R. Munkres.
\newblock {\em Topology}.
\newblock Prentice Hall, Inc., Upper Saddle River, NJ, 2000.
\newblock Second edition of [MR0464128].

\bibitem{Neukirch}
J.~Neukirch.
\newblock {\em Algebraic number theory}, volume 322 of {\em Grundlehren der
  Mathematischen Wissenschaften [Fundamental Principles of Mathematical
  Sciences]}.
\newblock Springer-Verlag, Berlin, 1999.
\newblock Translated from the 1992 German original and with a note by Norbert
  Schappacher, With a foreword by G. Harder.

\bibitem{Rogawski}
J.~D. Rogawski.
\newblock {\em Automorphic representations of unitary groups in three
 variables}, volume 123 of {\em Annals of Mathematics Studies}.
\newblock Princeton University Press, Princeton, NJ, 1990.

\bibitem{Willard}
S.~Willard.
\newblock {\em General topology}.
\newblock Dover Publications, Inc., Mineola, NY, 2004.
\newblock Reprint of the 1970 original [Addison-Wesley, Reading, MA;
  MR0264581].

\end{thebibliography}
\end{document}